\newtheorem{theorem}{Theorem}[section]
\newtheorem{lemma}[theorem]{Lemma}
\newtheorem{proposition}[theorem]{Proposition}
\newtheorem{corollary}[theorem]{Corollary}
\theoremstyle{definition}
\newtheorem{definition}[theorem]{Definition}
\newtheorem{example}[theorem]{Example}
\newtheorem{problem}[theorem]{Problem}
\theoremstyle{remark}
\newtheorem{remark}[theorem]{Remark}
\newtheorem*{acknowledgements}{Acknowledgements}
\numberwithin{equation}{section}
\newcommand{\ind}{{\textstyle{\mathop{\mathrm{ind}}}}}
\begin{document}

\title{Nonsmoothable group actions on spin $4$-manifolds}

\author{Kazuhiko KIYONO}
\address{Graduate School of Mathematical Sciences,
The University of Tokyo,
3-8-1 Komaba, Meguro-ku, Tokyo 153-8914, Japan.}
\curraddr{}
\email{kiyonok@ms.u-tokyo.ac.jp}
\thanks{}

\subjclass[2000]{Primary 57M60; Secondary 57R57}

\date{}

\dedicatory{}

\begin{abstract}
We show
 that every closed, simply connected, spin topological 4-manifold 
 except $S^4$ and $S^2\times S^2$ 
 admits a homologically trivial, pseudofree, locally linear action of $\mathbb Z_p$ 
 for any sufficiently large prime number $p$ 
 which is nonsmoothable for any possible smooth structure.
\end{abstract}

\maketitle

\section{Introduction}
In this article, 
 we call 
 a locally linear action of a group 
 on a topological manifold 
 \textit{nonsmoothable} 
 if the action is 
 not smooth with respect to any possible smooth structure.
Several authors have been investigated 
 examples of nonsmoothable group actions on 4-manifolds 
 \cite{KLee, KLaw, HL, B, N}.

We restrict our attention 
% to odd prime order cyclic group actions 
 to actions of the cyclic groups of odd prime order 
 which are homologically trivial and pseudofree.
%%%%%%%%%%%%%%%%%%%%%%%%%%%%%%%%%
%%%%%%%%%%%%%%%%%%%%%%%%%%%%%%%%%%
A.~L.~Edmonds constructed 
 such actions on all simply connected 4-manifolds \cite{E}.
The main purpose of this article is to show
  that there is a family of 
 locally linear actions constructed 
 by Edmonds's method which are nonsmoothable.
%%%%%%%%%%%%%%%%%%%%%%%%%%%%%%%%%%%
%%%%%%%%%%%%%%%%%%%%%%%%%%%%%%%%%%%

%The main purpose of this article is 
% to prove the following theorem.
\begin{theorem}\label{thm:main}
Let $X$ be a closed, simply connected, spin topological $4$-manifold 
% which is
 not homeomorphic to either $S^4$ or $S^2\times S^2$.
Then, 
 for any sufficiently large prime number $p$, 
% which depends on $X$,
 there exists a homologically trivial, pseudofree, 
 locally linear action of $\mathbb Z_p$ on $X$ 
 which is nonsmoothable.
\end{theorem}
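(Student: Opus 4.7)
The plan is to combine Edmonds's construction of pseudofree actions with an equivariant refinement of Furuta's $10/8$-inequality applied to the spin Dirac operator (or equivalently to the Seiberg--Witten / Bauer--Furuta invariant). Edmonds in \cite{E} builds, for every sufficiently large prime $p$, a homologically trivial, pseudofree, locally linear $\mathbb{Z}_p$-action on any simply connected closed 4-manifold; by the Lefschetz fixed-point formula such an action has exactly $\chi(X)=b_2(X)+2$ fixed points, each carrying a rotation-number pair $(a_j,b_j)\in((\mathbb{Z}/p)^{\times})^{2}$ from its linear model, and the construction allows these pairs to be prescribed essentially freely, subject only to a mild equivariant handle-compatibility condition.

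Next, I would assume for contradiction that, for some smooth structure on $X$, the action is smooth. Since $p$ is odd and $X$ is spin, the action lifts canonically to the spin structure, so one obtains an equivariant spin Dirac operator. The $G$-spin theorem of Atiyah--Singer expresses its equivariant index at a generator $g$ as a sum of local contributions
\[
\ind_g D \;=\; \sum_{j=1}^{\chi(X)} \frac{\epsilon_j}{(g^{a_j/2}-g^{-a_j/2})(g^{b_j/2}-g^{-b_j/2})},
\]
with signs $\epsilon_j=\pm 1$ determined by the spin lift, while the non-equivariant index equals $-\sigma(X)/8$. An equivariant version of Furuta's $10/8$-bound (in the line of Bryan and Fang--Furuta) then forces the $\mathbb{Z}_p$-representation assembled from $\{\ind_g D\}_{g\ne 1}$ to satisfy a linear inequality coupling $b_2^+(X)$, $\sigma(X)$, and the fixed-point contributions.

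The principal obstacle, where the technical core of the proof lies, is to select rotation numbers realizable by Edmonds's construction which violate this bound for every choice of spin lift $(\epsilon_j)$ and every smooth structure on $X$. I would carry this out by an asymptotic counting argument: the rotation-number configurations compatible with the equivariant Furuta bound form a set whose size grows polynomially in $p$ of a fixed degree determined by $b_2^+(X)$ and $\sigma(X)$, whereas the admissible Edmonds configurations grow strictly faster in $p$ once $\chi(X)\ge 5$. The excluded cases $S^4$ and $S^2\times S^2$ correspond precisely to $\chi(X)\le 4$, where either no nontrivial rotation data is available or the counting gap collapses. For $p$ large enough the gap is strict, so at least one Edmonds configuration fails the equivariant Furuta bound and therefore cannot be smoothable with respect to any smooth structure on $X$.
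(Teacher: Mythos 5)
Your overall frame matches the paper's: build the action by Edmonds's method, note that for odd $p$ the smooth action would lift uniquely to the spin structure, compute the equivariant Dirac index from the fixed-point data, and contradict an equivariant $10/8$-type bound (the paper uses the Fukumoto--Furuta inequality $\dim(\ind_{\mathbb Z_p}D)^{\mathbb Z_p}<b_2^+(X)$, together with its orientation-reversed counterpart, after checking $b_2^\pm(X)>0$ via Donaldson). But the step where you place the ``technical core'' --- the asymptotic counting argument --- is a genuine gap, and it is not how the difficulty is actually resolved. First, rotation data cannot be ``prescribed essentially freely'': realizability of a fixed-point datum by a homologically trivial, pseudofree, locally linear action is governed by the Edmonds--Ewing conditions (a representation count, a $G$-signature identity, and a multiplicative Reidemeister-torsion condition mod $p$), and arranging these is exactly what the paper's Section~2 does, by assembling the data from linear models $\mathbb C P^2_\alpha$, $\overline{\mathbb C P^2_{\alpha'}}$, $S^4_\beta$ and removing ``cancelling pairs.'' Any counting of ``admissible Edmonds configurations'' must be carried out inside this constrained set, which your sketch does not address. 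Second, the comparison of growth rates is unsubstantiated and dubious as stated: both the set of realizable configurations and the set of configurations satisfying the Furuta-type bound live in the same parameter space of rotation numbers (of size polynomial in $p$ of degree governed by $\chi(X)$), and a single integer inequality on the invariant-index dimension does not visibly cut the degree of growth; you give no mechanism (equidistribution of the relevant index sums, say) that would show the bound-compatible configurations are asymptotically negligible, nor why the threshold is exactly $\chi(X)\le 4$.

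The paper avoids all of this with an explicit, non-generic choice. It fixes realizable data consisting of one $\mathbb C P^2_{\alpha_0}$, copies of $\overline{\mathbb C P^2}$ with weights $(-1,R(j),R(j)+1)$ (which supply the needed cancelling pairs, using Donaldson's theorems to guarantee $b_2^+(X)\ge 3$ in the relevant case), and then exploits the computation
\[
N(p,(-1,0,1))-N(p,(-1,1,2))=2l \qquad\text{for } p=12l\pm1,\,12l\pm5,
\]
so that swapping the weights $(-1,0,1)$ and $(-1,1,2)$ between the $\mathbb C P^2$ and one $\overline{\mathbb C P^2}$ shifts the invariant index by $\pm 2l$. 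Hence one of the two explicit actions has invariant index of absolute value at least $2l\sim p/6$, which violates the Fukumoto--Furuta bound once $2l\ge\max\{b_2^+(X),b_2^-(X)\}$; since the invariant index is determined by the fixed-point data alone (Proposition~\ref{prop:ind}), the contradiction holds for every smooth structure at once. If you want to salvage your approach, you would need either this kind of explicit weight selection or a genuine equidistribution estimate for the index sums over Edmonds--Ewing--realizable data; the latter is much harder than the problem requires.
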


The conclusion of Theorem~\ref{thm:main} does not hold for $S^4$.
%Actually,
%D.~M.~Wilczy\'nski showed that, 
% for any odd non-negative integer $n$, 
% every pseudofree locally linear action of $\mathbb Z_n$ on $S^4$ 
% is smooth with respect to a smooth structure isomorphic to the standard one 
% \cite{Wil0}.
It is known 
 that every pseudofree locally linear action 
 of odd order cyclic group on $S^4$ 
 is smooth with respect to a smooth structure
 isomorphic to the standard one (see \cite{Wil0}).
Concerning smooth actions on $S^2 \times S^2$, 
 M.~Klemm obtained partial results \cite{Kl},
 while I do not know whether $S^2\times S^2$ admits 
 a homologically trivial, pseudofree, nonsmoothable locally linear action 
 or not.
The following problem seems open.
\begin{problem}
Is there a homologically trivial, pseudofree, nonsmoothable 
 locally linear  action of $\mathbb Z_p$ on $S^2 \times S^2$ 
 for some odd prime number $p$?
\end{problem}

Let $NS(X)$ be the set of every prime number $p$ 
 for which $X$ admits 
 a homologically trivial, pseudofree, 
 nonsmoothable locally linear action of $\mathbb Z_p$.
Theorem~\ref{thm:main} tells that the complement of $NS(X)$ 
 in the set of prime numbers 
 is bounded for each closed, simply connected, spin $4$-manifold $X$ 
 if $X$ is not homeomorphic to $S^4$ or $S^2 \times S^2$.
In the proof of Theorem~\ref{thm:main}, 
 we obtain an estimate of the maximum value of the complement.
\begin{theorem} \label{an estimate}
For any closed, simply connected, spin $4$-manifold $X$ 
 not homeomorphic to $S^4$ or $S^2\times S^2$, 
%When $X$ is not homeomorphic to $S^4$ or $S^2 \times S^2$,
 $NS(X)$ contains all the prime numbers $p$ satisfying 
 \begin{equation}\label{eq:zatu}
  p\geq12\left[\frac{\max\{b_2^+(X),b_2^-(X)\}+1}2\right]-5.
 \end{equation}
\end{theorem}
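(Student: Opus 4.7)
The plan is to carry out the construction from Theorem~\ref{thm:main} while keeping track of the dependence on $p$ in order to extract the explicit threshold in~(\ref{eq:zatu}). I would first invoke Edmonds's construction to produce, for each odd prime $p$, a homologically trivial, pseudofree, locally linear $\mathbb{Z}_p$-action on $X$ obtained by equivariant surgery starting from a linear model on $S^{4}$. Edmonds's method allows one to prescribe the rotation weights $(a_{i},b_{i})\in(\mathbb{Z}/p)^{\times}\times(\mathbb{Z}/p)^{\times}$ at the $\chi(X)$ isolated fixed points with considerable freedom, subject only to the Lefschetz fixed-point count and the $G$-signature identity
\begin{equation*}
\sigma(X)\;=\;-\sum_{i}\cot\!\Big(\tfrac{\pi a_{i}}{p}\Big)\cot\!\Big(\tfrac{\pi b_{i}}{p}\Big).
\end{equation*}
This produces, for each admissible choice of weights, a topological action on $X$ which is a candidate for being nonsmoothable.

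Next I would derive a smoothness obstruction. Suppose for contradiction that one of these actions is smooth with respect to some smooth structure on $X$. Since $p$ is odd, the action lifts canonically to the spin structure, so by the Atiyah--Singer $G$-spin theorem the equivariant Dirac index is a virtual $\mathbb{Z}_{p}$-representation expressible as a sum of local contributions at the fixed points. Feeding this equivariant index into an equivariant refinement of Furuta's $10/8$ theorem yields a necessary condition of the form
\begin{equation*}
b_{2}^{+}(X)\;\geq\;F\bigl(p;\{(a_{i},b_{i})\}\bigr),
\end{equation*}
where $F$ is an explicit function built from trigonometric sums in the fixed-point weights. Reversing the orientation of $X$ when necessary allows us to replace $b_{2}^{+}(X)$ on the left by $\max\{b_{2}^{+}(X),b_{2}^{-}(X)\}$, which accounts for the appearance of the maximum in~(\ref{eq:zatu}).

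The last step is quantitative. I would choose the rotation data in Edmonds's construction so as to maximize $F(p;\cdot)$ relative to $p$, and then verify that under the hypothesis~(\ref{eq:zatu}) the resulting $F$ strictly exceeds $\max\{b_{2}^{+}(X),b_{2}^{-}(X)\}$, contradicting smoothness. The main obstacle will be pinning down the precise constants $12$ and $-5$: this requires an essentially combinatorial choice of the weights at the fixed points together with a sharp estimate for the cotangent sums arising in the equivariant $10/8$ inequality. The hypothesis that $X$ is neither $S^{4}$ nor $S^{2}\times S^{2}$ enters exactly here, in guaranteeing that the fixed-point data produced by Edmonds's method has enough nontrivial weights for the obstruction to acquire the needed strength.
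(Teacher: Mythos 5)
Your plan follows the paper's general strategy (Edmonds-type construction plus an equivariant $10/8$ obstruction), but it stops exactly where the content of this theorem begins: the constants $12$ and $-5$ in \eqref{eq:zatu}. You explicitly defer them (``the main obstacle will be pinning down the precise constants''), proposing to extract them from sharp estimates of cotangent sums in a fixed-point formula. That is not how the bound arises, and estimating $G$-signature-type cotangent sums uniformly in $p$ would not obviously give a linear-in-$p$ lower bound with these constants. The paper instead computes the $\mathbb Z_p$-invariant part of the Dirac index \emph{exactly}: by comparing the action on $X$ with a disjoint union of linear models $\mathbb C P^2_\alpha$, $\overline{\mathbb C P^2_{\alpha'}}$, $S^4_\beta$ carrying equivariant spin$^c$-structures whose Dirac operators are twisted Dolbeault operators, one gets $\dim(\ind_{\mathbb Z_p}D_X)^{\mathbb Z_p}=\sum_i N(p,\alpha_i)-\sum_j N(p,\alpha'_j)-\sigma(X)p/8$, where $N(p,\alpha)$ is an explicit lattice-point count (Proposition~\ref{prop:ind}). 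The constant $12$ comes from the exact identity $N(p,(-1,0,1))-N(p,(-1,1,2))=2l$ for $p=12l\pm1,12l\pm5$, and $-5$ from solving $2l\geq\max\{b_2^+,b_2^-\}$ at the smallest residue $p=12l-5$. Without some computation of this kind your argument proves nothing quantitative, so the theorem is not established.

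Two further points would need repair even at the structural level. First, realizability is not ``considerable freedom subject only to the Lefschetz count and the $G$-signature identity'': the Edmonds--Ewing criterion also includes a torsion condition (TOR), and the paper satisfies all three conditions by assembling fixed-point data from linear models with cancelling pairs (Proposition~\ref{prop:exist}, Lemma~\ref{lem:spin}); the exclusion of $S^4$ and $S^2\times S^2$ enters there concretely, via Donaldson's theorems, to guarantee $b_2^{\pm}(X)>0$ (so the Fukumoto--Furuta inequality applies with nonzero right-hand side) and $b_2^+(X)\geq3$ (so enough cancelling pairs exist), not vaguely ``enough nontrivial weights.'' Second, the obstruction is two-sided, $-b_2^-(X)<\dim(\ind_{\mathbb Z_p}D_X)^{\mathbb Z_p}<b_2^+(X)$, and the weights $\alpha'_j$ forced by realizability contribute an uncontrolled baseline $I$ to the index; the paper handles this by producing \emph{two} candidate actions, differing by swapping the free weights $(-1,0,1)$ and $(-1,1,2)$, whose indices are $I+2l$ and $I-2l$, so that at least one has absolute value $\geq 2l$ and violates the inequality once $2l\geq\max\{b_2^+(X),b_2^-(X)\}$. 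Your proposal to ``choose the rotation data to maximize $F$'' does not address how to beat this unknown baseline.
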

\noindent
Here $[x]$ is the maximum integer less than or equal to $x$.
Though
 we need to fix an orientation of $X$ 
 to define $b_2^+(X)$ and $b_2^-(X)$,
%However 
 the right-hand side of the above estimate of $p$ 
 does not depend on the choice.

The above estimate is not best possible.
We show a better estimate 
 for the connected sums of the copies of $S^2\times S^2$.
\begin{theorem}\label{thm:sharp}
1. $NS(S^2\times S^2 \# S^2 \times S^2)$ 
 contains all the prime numbers $p \geq 7$.

\noindent
2. For $n\geq 3$, 
 $NS(\#^n S^2 \times S^2)$ 
 contains all the prime numbers $p\geq 19$.
\end{theorem}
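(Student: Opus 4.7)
Part 1 is immediate from Theorem~\ref{an estimate}: for $X = S^2\times S^2\# S^2\times S^2$ we have $b_2^\pm(X)=2$, so the right-hand side of \eqref{eq:zatu} equals $12\cdot 1 - 5 = 7$. The substantive content of the theorem is part 2, which asserts a bound uniform in $n\geq 3$ and strictly sharper than the $n$-dependent bound $12\bigl[(n+1)/2\bigr] - 5$ that Theorem~\ref{an estimate} supplies for $\#^n S^2\times S^2$.

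The plan for part 2 is an induction on $n$ anchored at $n=3$, where Theorem~\ref{an estimate} itself already yields the bound $p\geq 19$. For each prime $p\geq 19$ we begin by fixing a pseudofree, homologically trivial, locally linear, nonsmoothable $\mathbb{Z}_p$-action $\rho_3$ on $\#^3 S^2\times S^2$ produced by the proof of Theorem~\ref{an estimate}. We then construct a pseudofree linear $\mathbb{Z}_p$-action $\tau$ on $S^2\times S^2$ with at least two fixed points and whose rotation weights at some fixed point can be matched, via an orientation-reversing equivariant diffeomorphism of linking $3$-spheres, to those of a fixed point of $\rho_3$, making an equivariant connect sum possible. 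For $n>3$ we define $\rho_n$ on $\#^n S^2\times S^2$ inductively by equivariantly connect-summing $(S^2\times S^2,\tau)$ onto $\rho_{n-1}$ at matching fixed points. Each $\rho_n$ is pseudofree, homologically trivial, and locally linear by construction.

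The remaining task---verifying that each $\rho_n$ is nonsmoothable---is the main obstacle. The proof of Theorem~\ref{an estimate} extracts from any smoothing a numerical inequality whose failure forces nonsmoothability, and one must show that the analogous inequality for $\rho_n$ fails whenever the one for $\rho_3$ does, despite $b_2^\pm$ growing with $n$. The crux is to engineer $\tau$ so that its fixed points contribute zero in aggregate to the fixed-point side of the inequality, while the added $S^2\times S^2$ summand interacts with the equivariant cohomology in a way that does not weaken the global side. Realizing this cancellation through an explicit choice of weights for $\tau$---compatible both with the equivariant matching condition above and with the arithmetic structure of the inequality derived in the proof of Theorem~\ref{an estimate}---is the central technical step that separates Theorem~\ref{thm:sharp} from the general estimate.
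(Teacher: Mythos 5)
Part 1 of your proposal is fine: for $\#^2 S^2\times S^2$ the bound of Theorem~\ref{an estimate} is exactly $12\cdot1-5=7$, and that theorem is proved independently, so quoting it is legitimate. Part 2, however, has a genuine gap, and not merely because you leave the ``central technical step'' unproven. The strategy itself cannot work as described. The only nonsmoothability criterion available here is Theorem~\ref{thm:smooth}: the action is nonsmoothable when the quantity computed in Proposition~\ref{prop:ind}, which depends only on the fixed point data, falls outside the interval $(-b_2^-(X),\,b_2^+(X))$. If you anchor at $n=3$ and then attach $S^2\times S^2$ summands whose fixed points ``contribute zero in aggregate'' to the index, the left side of the comparison stays frozen at its $n=3$ value (for $p=12l+q$, $q=\pm1,\pm5$, this value is $2l$), while the right side $b_2^+(\#^n S^2\times S^2)=n$ grows without bound. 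As soon as $n>2l$ the inequality of Theorem~\ref{thm:smooth} is satisfied and the criterion detects nothing, so no uniform bound $p\geq19$ for all $n\geq3$ can come out of this plan; the added summands must contribute to the index at a rate that keeps pace with $b_2^+$, which is exactly the opposite of the cancellation you propose to engineer. (A secondary point: the paper never performs equivariant connected sums of actions; it realizes assembled fixed point data directly on $X$ via Proposition~\ref{prop:exist}, i.e.\ the Edmonds--Ewing criterion, so your matching-of-linking-spheres step is also not needed in the paper's framework.)

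For comparison, the paper's proof is a single direct construction for each $n$: write $n+1=3m+r$ with $0\leq r\leq2$, take $m'=m$, $s=0$, use $m$ copies of $\mathbb C P^2_{(-1,0,1)}$, $m$ copies of $\overline{\mathbb C P^2_{(-1,1,2)}}$ and $r$ copies of $S^4_{\beta}$, and apply Proposition~\ref{prop:exist}. By Proposition~\ref{prop:ind} and the computation \eqref{eq:N-N}, a smoothing would force
\[
\dim(\ind_{\mathbb Z_p}D_X)^{\mathbb Z_p}=m\bigl(N(p,(-1,0,1))-N(p,(-1,1,2))\bigr)=2lm
\quad\textrm{for}\quad p=12l+q,\ q=\pm1,\pm5,
\]
and since $2lm\geq4m\geq 3m+r-1=n=b_2^+(X)$ whenever $l\geq2$ (i.e.\ $p\geq19$; for $n=2$, $m=1$, $r=0$, already $l\geq1$, i.e.\ $p\geq7$, suffices), the inequality of Theorem~\ref{thm:smooth} is violated. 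The essential point you are missing is that each block of three $S^2\times S^2$ summands is made to add $2l\geq4$ to the index while adding only $3$ to $b_2^+$, so the violation persists uniformly in $n$.
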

We also obtain
\begin{theorem}\label{thm:K3}
$11 \in NS(K3)$.
\end{theorem}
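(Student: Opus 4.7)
The plan is to adapt the method behind Theorems~\ref{an estimate} and \ref{thm:sharp} to the specific topology of $K3$, exploiting the fact that $K3$ sits exactly on the equality boundary of Furuta's ordinary $10/8$-inequality. This is what should allow the general estimate (which gives $p\geq 115$ for $K3$) to collapse down to $p=11$.

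First I would construct, via Edmonds's method, a homologically trivial, pseudofree, locally linear action of $\mathbb{Z}_{11}$ on $K3$ with carefully prescribed fixed-point data. Because the action is homologically trivial, the Lefschetz fixed-point theorem forces exactly $\chi(K3)=24$ isolated fixed points, each carrying a rotation pair $(a,b)\in((\mathbb{Z}/11)^{*})^{2}$. Edmonds's construction leaves enough freedom in choosing these rotation pairs to allow us to engineer an equivariant structure convenient for the obstruction step.

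Next I would assemble the smooth obstructions. Since $\mathbb{Z}_{11}$ has odd order, any putative smooth action lifts uniquely to the principal spin bundle, and the equivariant index theorem expresses the equivariant Dirac index $\ind^{\mathbb{Z}_{11}} D \in R(\mathbb{Z}_{11})$ as a sum of local contributions at the $24$ fixed points, with total augmentation $\hat A(K3)=2$. In parallel, the $G$-signature theorem forces $\mathrm{sign}(g,X)=\sigma(K3)=-16$ for each $1\neq g\in\mathbb{Z}_{11}$. I would then invoke the $\mathrm{Pin}(2)\times\mathbb{Z}_{11}$-equivariant Bauer-Furuta/$10/8$-type inequality that underlies Theorem~\ref{an estimate}: since the $\mathbb{Z}_{11}$-action on $H^2_+(K3;\mathbb{R})$ is trivial and of dimension $b_2^+=3$, this reduces to a sharp constraint relating the isotypic decomposition of $\ind^{\mathbb{Z}_{11}} D$ to the number $3$ on the cohomotopy side.

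The final step, and the main obstacle, is to match these pieces: I would show that one can choose rotation numbers in Edmonds's construction so that the fixed-point contributions force the nontrivial isotypic components of $\ind^{\mathbb{Z}_{11}} D$ into values incompatible with the above equivariant inequality. Because $K3$ saturates Furuta's $10/8$-bound, there is no numerical slack to absorb any violation, and the obstruction bites already at $p=11$. The technical difficulty is a careful bookkeeping problem: one must simultaneously satisfy the constraints imposed on the $24$ rotation pairs by the $G$-signature identity, by the Lefschetz-Dirac identity, and by topological realisability in Edmonds's construction, while violating the $\mathrm{Pin}(2)\times\mathbb{Z}_{11}$-equivariant bound. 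Once such rotation data is exhibited, the corresponding topological action cannot be smoothed with respect to any smooth structure on $K3$, establishing $11\in NS(K3)$.
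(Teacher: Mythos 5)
Your outline follows the same general strategy as the paper (Edmonds-type construction of the locally linear action, unique spin lifting for odd order, the invariant part of the equivariant Dirac index, and the Fukumoto--Furuta $10/8$-type inequality for the quotient), but it stops exactly where the theorem actually lives. The entire content of $11\in NS(K3)$ is the explicit exhibition of fixed point data realizable on $K3$ for which the smooth obstruction is violated at the small prime $p=11$; you defer this as ``a careful bookkeeping problem'' and never produce the rotation data, never verify realizability, and never compute the index. This is a genuine gap, not a routine verification: the existence of such data is delicate (the same method provably cannot produce a nonsmoothable $\mathbb Z_5$-action on any spin $4$-manifold, nor any example on $S^2\times S^2$, and the general estimate of Theorem~\ref{an estimate} only reaches $p>113$ for $K3$), so one cannot wave at ``enough freedom in Edmonds's construction'' and conclude.

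Concretely, the paper's proof takes $m=r=0$, $m'=16$, $s=12$, with the sixteen weights built by repeating $\alpha'_1=(-1,1,2)$, $\alpha'_2=(-1,2,3)$, $\alpha'_3=(-1,3,4)$, $\alpha'_4=(-2,2,4)$ four times; the cancelling pairs of Example~\ref{ex:cancel} (plus one extra pair between $\alpha'_3$ and $\alpha'_4$) give the required $12$ cancellations, and Proposition~\ref{prop:exist} (i.e.\ the Edmonds--Ewing conditions REP, GSF, TOR, which you only gesture at via Lefschetz and $G$-signature considerations) guarantees a homologically trivial, pseudofree, locally linear $\mathbb Z_{11}$-action on $K3$ with this data. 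Then $N(11,\alpha'_j)=1$ for each $j$, so Proposition~\ref{prop:ind} gives $\dim(\ind_{\mathbb Z_{11}}D)^{\mathbb Z_{11}}=-16-\tfrac{\sigma(K3)}{8}\cdot 11=6$, which violates the bound $<b_2^+(K3)=3$ of Theorem~\ref{thm:smooth}. Note also that your heuristic that the argument works because ``$K3$ saturates Furuta's $10/8$-bound'' is not the mechanism: the obstruction used is the inequality $\dim(\ind_G D)^G<\dim H^2_+(X;\mathbb R)^G$ for the $\mathbb Z_{11}$-quotient $V$-manifold, and what makes $p=11$ work is the arithmetic of the chosen weights (the term $-\sigma(X)p/8=22$ against $\sum_j N(11,\alpha'_j)=16$), not any numerical tightness of the nonequivariant $10/8$ inequality for $K3$ itself.
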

%It is known $2 \in NS(K3)$

We prove Theorem \ref{thm:main} in three steps.
In section 2
 we give a family of 
 homologically trivial, pseudofree, locally linear actions, 
 slightly modifying the construction of Edmonds in \cite{E} 
 and making use of the criterion of %A.~L.~
 Edmonds and  J.~H.~Ewing in \cite{EE}.
In section 3 
 we calculate the dimension of $\mathbb Z_p$-invariant part 
 of the $\mathbb Z_p$-index of the Dirac operator 
 for the action constructed in Section 2, 
 assuming that $X$ is spin and that the action is smooth 
 for some smooth structure %on it
 (Proposition~\ref{prop:ind}).
The dimension is equal to the index of the Dirac operator 
 on the quotient $V$-manifold $X/\mathbb Z_p$.
In section 4 
 we prove Theorem \ref{thm:main} 
 applying the $10/8$-type inequality 
 for the quotient $V$-manifold $X/\mathbb Z_p$ in \cite{FF}.
\begin{remark}
Presumably 
 the estimate in Theorem~\ref{thm:sharp} %and \ref{thm:K3} 
% may
 could be improved further in general.
We also do not know the set $NS(K3)$ exactly 
 while Theorem~\ref{an estimate} tells 
 that $NS(K3)$ contains all the prime numbers greater than $113$.
\end{remark}
\begin{remark}
When a smooth structure is endowed 
 on a topological manifold, 
 a locally linear group action on the topological manifold 
 is called nonsmoothable 
 if the action is not smooth 
 with respect to any smooth structure isomorphic to the given one.
W.~Chen and S.~Kwasik 
 constructed group actions on $K3$ surface of this type,
 which are smooth with respect to the standard smooth structure 
 but not smooth with respect to infinitely many exotic structures \cite{CK}. 
X.~Liu and N.~Nakamura constructed 
 group actions on elliptic surfaces 
 which are not smooth with respect to infinitely many smooth structures 
 including the standard smooth structure \cite{LN1, LN2}.
It is not known 
 whether the examples of Liu and Nakamura are nonsmoothable 
 for every smooth structure 
 or not.
%%%%%%%%%%%%%%%%%%%%%%%%%%%%%%%%%%%%%%%%%%%%%%%%%
%%%%%%%%%%%%%%%%%%%%%%%%%%%%%%%%%%%%%%%%%%%%%%%%%
Liu and Nakamura 
 used mod-$p$ vanishing theorem of Seibert-Witten invariant 
 for 4-manifolds with non-vanishing Seiberg-Witten invariant.
Recently Nakamura applied a similar method to $K3 \# K3$, 
 for which the Seiberg-Witten invariant is zero 
 but its cohomotopy refinement does not vanish \cite{N}.
We also use Seiberg-Witten theory 
 to investigate nonsmoothability of finite group action.
Our approach is 
 to apply an equivariant version of $10/8$-inequality 
to spin $4$-manifolds, 
which does not depend on non-vanishing of Seiberg-Witten invariant.
%%%%%%%%%%%%%%%%%%%%%%%%%%%%%%%%%%%%%%%%%%%%%%%%%%
%%%%%%%%%%%%%%%%%%%%%%%%%%%%%%%%%%%%%%%%%%%%%%%%%%
\end{remark}

\begin{acknowledgements}
The author thanks %to would like to express deep
% gratitude to 
 Mikio Furuta for his invaluable advice and encouragement,
%He also like to thank  
 Allan %L.~
 Edmonds for information  on 
 pseudofree, locally linear group actions on $S^4$, and
%He is {\bf also} grateful to 
 Yukio Kametani and Nobuhiro Nakamura
 for helpful discussion.
\end{acknowledgements}

\section{Locally linear actions}
Let $X$ be a closed, oriented, simply connected 
 topological $4$-manifold not necessarily spin.
Edmonds proved the following theorem. % in \cite{E}.
\begin{theorem}
[{\bf Edmonds}, {\sc Theorem} 6.4 in \cite{E}]\label{thm:exist}
For any prime number $p$ not less than $5$, 
 there exists a homologically trivial, pseudofree, locally linear action
 of $\mathbb Z_p$ on $X$.
\end{theorem}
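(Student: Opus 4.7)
The plan is to reduce the statement to a realization criterion for pseudofree, homologically trivial, locally linear $\mathbb{Z}_p$-actions on simply connected $4$-manifolds due to Edmonds and Ewing, combined with Freedman's topological classification. By Freedman's theorem, the homeomorphism type of a closed, oriented, simply connected topological $4$-manifold $X$ is determined by its intersection form $Q_X$ together with, in the even case, the Kirby--Siebenmann invariant; so it is enough to construct a locally linear pseudofree homologically trivial $\mathbb{Z}_p$-action on some manifold homeomorphic to $X$ with the correct $Q_X$ and Kirby--Siebenmann data.

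First, I would collect a library of linear building blocks with pseudofree $\mathbb{Z}_p$-actions whose fixed-point structure is understood explicitly: the suspension of a free linear rotation on $S^3$ gives an action on $S^4$ with two fixed points carrying opposite rotation invariants; standard linear rotations on $\mathbb{CP}^2$, $\overline{\mathbb{CP}^2}$, and $S^2\times S^2$ supply further pseudofree pieces with a small number of isolated fixed points whose local weights are easy to list. Since $p\geq 5$, these weights can be chosen from enough distinct residues in $(\mathbb{Z}/p)^\times$ to allow for matching in later gluings.

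Next, I would assemble $X$ from these pieces by equivariant connected sum carried out at fixed points. In an invariant disk neighborhood of any fixed point the action is linear, so one may delete two such disks with compatible rotation data and glue along the boundary $3$-spheres via a $\mathbb{Z}_p$-equivariant homeomorphism, preserving pseudofreeness, local linearity, and (by a Mayer--Vietoris argument on the transfer) homological triviality. Iterating this procedure with copies of $\mathbb{CP}^2$, $\overline{\mathbb{CP}^2}$, $S^2\times S^2$ realizes every indefinite odd form and the even forms $nH$, while in the spin case one additionally needs a locally linear pseudofree $\mathbb{Z}_p$-action on the topological $|E_8|$-manifold. For the latter, one attaches equivariant linear $2$-handles along an invariant framed link realizing the $E_8$-linking matrix and caps the resulting Poincaré-homology-sphere boundary by the invariant topological cone provided by Freedman.

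Finally, I would invoke the Edmonds--Ewing criterion on the resulting fixed-point configuration to confirm that the assembly indeed produces a homologically trivial action: the criterion translates the combination of local rotation data and global topology into $G$-signature type congruences mod $p$, and the flexibility in the choice of rotation numbers (guaranteed by $p\geq 5$) lets one arrange the congruences to hold. The principal obstacle is the spin case: one cannot perform a smooth connected sum with an $E_8$-summand, so the equivariant $|E_8|$-model has to be built by purely topological handle attachments, and one must check directly that the local rotation data coming out of this construction satisfies the Edmonds--Ewing congruences and produces a homologically trivial action.
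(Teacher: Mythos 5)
Your proposal has a genuine structural confusion at its core: you first try to build the action by explicit equivariant connected sums (plus an equivariant $E_8$-block), and then ``invoke the Edmonds--Ewing criterion on the resulting fixed-point configuration to confirm that the assembly indeed produces a homologically trivial action.'' That is backwards. The Edmonds--Ewing theorem is a \emph{realization} criterion: given abstract fixed point data and a form with $\mathbb Z_p$-action satisfying their conditions (REP, GSF, TOR), it produces a pseudofree locally linear action on the manifold with that form; it is not a tool for verifying properties of an action you have already constructed geometrically. This is exactly how the paper proceeds (Proposition~\ref{prop:exist}): one only writes down candidate fixed point data by gathering the local data of linear actions on $\mathbb C P^2$, $\overline{\mathbb C P^2}$ and $S^4$, checks the three conditions arithmetically, and lets Edmonds--Ewing (whose proof already contains the Freedman-type topological surgery) do all the geometric work on $X$; no equivariant connected sum or handle construction on $X$ is performed at all. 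Edmonds's original proof of Theorem~\ref{thm:exist} is likewise by equivariant surgery starting from linear actions on $\#^{b_2^+}\mathbb C P^2\#^{b_2^-}\overline{\mathbb C P^2}$, not by assembling $X$ from equivariant summands. If you insist on the geometric assembly, you also owe the bookkeeping that a homologically trivial pseudofree action must have exactly $\chi(X)$ fixed points (this is the condition REP), which constrains how many fixed-point connected sums you may perform and which local representations must match; you nowhere address this.

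The step that actually fails is your equivariant $E_8$-block. Capping the Poincar\'e homology sphere by ``the invariant topological cone provided by Freedman'' is not available: the honest cone on the Poincar\'e sphere is not a topological manifold, and Freedman's contractible $4$-manifold bounded by it is not a cone and does not come with an extension of the free $\mathbb Z_p$-action. By Smith theory any extension over a contractible cap has a fixed point, and local linearity there only constrains a small linking $S^3$, so such an extension is not outright impossible --- but producing a locally linear pseudofree extension with the right local data is precisely a theorem of the same depth as Edmonds--Ewing (it needs Freedman surgery with finite odd fundamental group), so your sketch is circular rather than a proof. In addition, your list of building blocks consists of smooth manifolds (plus the hypothetical $E_8$-block), so equivariant connected sums cannot reach every simply connected $X$ in the statement: for odd intersection forms with nontrivial Kirby--Siebenmann invariant (e.g.\ the Chern manifold $\ast\mathbb C P^2$) no such decomposition exists, whereas the Edmonds--Ewing realization applies to any closed simply connected $X$ directly. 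So the proposal, as written, does not close either the spin case or the non-smoothable topological cases, both of which are handled in the cited references by the realization theorem rather than by geometric assembly.
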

Edmonds constructed the group action 
 using equivariant surgery 
 on the connected sum of $b_2^+(X)$-copies of $\mathbb C P^2$ 
 and 
 $b_2^-(X)$-copies of $\overline{\mathbb C P^2}$ 
 for some choice of $\mathbb Z_p$-action.
%Several years later,
Moreover Edmonds and Ewing 
 obtained a necessary and sufficient criterion 
 for realizability of a pair of 
 a fixed point data and a unimodular quadratic form with $\mathbb Z_p$-action
 by a pseudofree locally linear $\mathbb Z_p$-action on $X$ 
 \cite{EE}.
In this section 
 we follow Edmonds's construction 
 with a slight modification 
 to obtain a family of fixed point data 
 satisfying Edmonds and Ewing's criterion.
More specifically,
 we make realizable fixed point data 
 by gathering the fixed point data of 
 pseudofree $\mathbb Z_p$-actions 
 on $\mathbb C P^2$, $\overline{\mathbb C P^2}$ and $S^4$.

We identify $\mathbb Z_p$ with the subgroup % $\{g \in U(1) |g^p=1\}$
 of $U(1)$,
% with prime order $p$ 
 and,
 for an integer $a$,
 let $\mathbb C_a$ be the one-dimensional complex representation of $\mathbb Z_p$ 
 defined by $z\mapsto g^az$ 
 for $z\in\mathbb C$ and $g\in\mathbb Z_p$.

\begin{definition}\label{defn:linear}
Using weights $\alpha=(a_0,a_1,a_2)$, $\alpha'=(a'_0,a'_1,a'_2)$ 
 and $\beta=(b_1,b_2)$ respectively,
 define $\mathbb Z_p$-manifolds 
 $\mathbb C P^2_\alpha$, $\overline{\mathbb C P^2_{\alpha'}}$ and $S^4_\beta$ 
 as $\mathbb C P^2$, $\overline{\mathbb C P^2}$ and $S^4$ with pseudofree $\mathbb Z_p$-actions 
 as follows.
\begin{enumerate}
\item 
Suppose $a_0$, $a_1$ and $a_2$ are integers 
 which are not congruent modulo $p$ each other.
Let $\mathbb C P^2_\alpha$ denote the quotient space 
 $(\mathbb C_{a_0}\oplus\mathbb C_{a_1}\oplus\mathbb C_{a_2}\setminus\{(0,0,0)\})/\mathbb C^\ast$.
\item
Suppose $a'_0$, $a'_1$ and $a'_2$ are integers 
 which are not congruent modulo $p$ each other.
Let $\overline{\mathbb C P^2_{\alpha'}}$ denote 
 the same as $\mathbb C P^2_{\alpha'}$ 
 but with the opposite orientation.
\item
Suppose $b_1$ and $b_2$ are integers not congruent to 0 modulo $p$ either.
Let $S^4_\beta$ denote the unit sphere of $\mathbb C_{b_1}\oplus\mathbb C_{b_2}\oplus\mathbb R$,
 where $\mathbb R$ is the trivial one-dimensional real representation of $\mathbb Z_p$.
\end{enumerate}
\end{definition}
Note that the two weights $\alpha_0=(a_0, a_1, a_2)$ and
 $\alpha_1=(a_0+1,a_1+1,a_2+1)$ give 
 the same action on $\mathbb C P^2$, 
 hence $\mathbb C P^2_{\alpha_0}$ and $\mathbb C P^2_{\alpha_1}$ are 
 $\mathbb Z_p$-equivariantly diffeomorphic.
From now on 
 we assume that $a_0+a_1+a_2$ and $a'_0+a'_1+a'_2$ are even 
 for weights $\alpha=(a_0,a_1,a_2)$ and $\alpha'=(a'_0,a'_1,a'_2)$ 
 in Definition~\ref{defn:linear} 
 without loss of generality.

To make a realizable fixed point data 
 by gathering those of 
 $\mathbb C P^2_\alpha$'s, $\overline{\mathbb C P^2_{\alpha'}}$'s and $S^4_\beta$'s 
 we may need to reduce the number of fixed points.

\begin{definition}\label{defn:cancel}
We call a pair of fixed points \textit{a cancelling pair} 
 if there is a weight $\beta$ 
 such that the fixed point data of $S^4_\beta$ 
 coincides with that of the pair.
We also call 
 such a weight $\beta$ \textit{a weight of the cancelling pair}.
\end{definition}

A pair of fixed points is a cancelling pair 
 if and only if 
 the two isotropy representations at the two fixed points
 are isomorphic 
 to each other through 
 an orientation-reversing isomorphism.
The weight of the cancelling pair is 
 one of the weights of these representations.
(We have two possible representatives
 of weights for each cancelling pair.)

We will use the following cancelling pairs later.
 These examples are special cases of 
 {\sc Lemma} 6.2 in \cite{E}.
\begin{example}\label{ex:cancel}
Let $p$ be a prime number not less than 5.
\begin{enumerate}
\item Let $a,b$ and $c$ be integers 
      satisfying $a-b,a-b-c,a-b-2c,c \not\equiv 0 \bmod p$
      and $a \equiv c \bmod 2$.
%      such that $a,b,b+c,b+2c$ are not congruent modulo $p$ each other 
%      and $a+c$ is even.
      For $\alpha_1=(a,b,b+c)$ and $\alpha_2=(a,b+c,b+2c)$,
      the pair 
      $[0,0,1]$ on $\mathbb C P^2_{\alpha_1}$ 
      and
      $[0,1,0]$ on $\mathbb C P^2_{\alpha_2}$ 
      is a cancelling pair.
\item Let $i$ be an integer 
      satisfying $i \not\equiv -1,-2,-3 \bmod p$.
%      not congruent to either $-1$, $-2$, or $-3$ modulo $p$.
      For $\alpha_1=(-1,i,i+1)$ and $\alpha_2=(-1,i+1,i+2)$, 
      the pair of 
      $[0,0,1]$ on $\mathbb C P^2_{\alpha_1}$ 
      and
      $[0,1,0]$ on $\mathbb C P^2_{\alpha_2}$
      is a cancelling pair.
\item For $\alpha_1=(-1,p-4,p-3)$ and $\alpha_2=(-1,0,1)$,
      the pair of 
      $[0,0,1]$ on $\mathbb C P^2_{\alpha_1}$ 
      and
      $[1,0,0]$ on $\mathbb C P^2_{\alpha_2}$
      is a cancelling pair.
\item Let $n$ be a positive integer.
      For each $1\leq i \leq n$
      we write $R(i)$ for the remainder 
      of $i$ divided by $p-3$, and
      $\alpha_i$ for the weight $(-1,R(i),R(i)+1)$.
      % ($1\leq i\leq n$),
      There are $n-1$ cancelling pairs 
      in  the fixed points  
      of the disjoint union
      $\coprod_{1\leq i\leq n}\mathbb C P^2_{\alpha_i}$.
%, \ldots, $\mathbb C P^2_{\alpha_n}$.
      \end{enumerate}
Note that
 (2) is the special case of (1) with $a=-1$, $b=i$, and $c=1$,
(3) is essentially the case (2) with $i=p-4$
      since weights $(-1,0,1)$ and $(p-3,p-2,p-1)$ induce
      the same action,
%      and the latter weight is the same as $(-1,p-3,p-2)$ 
%      up to order of components.
and (4) is a consequence of the cases (2) and (3).
\end{example}

\begin{proposition}\label{prop:exist}
Let $X$ be a closed, oriented, simply connected topological $4$-manifold,
 and $m$, $m'$, $r$ and $s$ non-negative integers satisfying
 \[
  m-m'=\sigma(X)\quad\textrm{and}\quad
  3(m+m')+2(r+s)=\chi(X),
 \]
 where $\sigma(X)$ and $\chi(X)$ 
 are the signature and the Euler number of $X$ 
respectively.
Suppose there are weights
 ${\alpha_i}$  $(1\leq i\leq m)$,
 ${\alpha'_j}$ $(1\leq j\leq m')$, 
 and ${\beta_k}$ $(1\leq k\leq r)$ such
 that the fixed point set of the disjoint union
 \[
  \left(\coprod_{1\leq i\leq m} \mathbb C P^2_{\alpha_i}\right)
  \coprod
  \left(\coprod_{1 \leq j \leq m'} 
  \overline{\mathbb C P^2_{\alpha'_j}}\right)
  \coprod
  \left(\coprod_{1 \leq k \leq r}S^4_{\beta_k}\right)
 \]
 has %(at least) 
 $s$ cancelling pairs. 
Let $\mathcal D$ be the fixed point data 
 for those fixed points 
 which does not appear in the $s$ cancelling pairs.
Then 
 there exists a homologically trivial, pseudofree, 
 locally linear action of $\mathbb Z_p$ on $X$ 
 whose fixed point data is the same as $\mathcal D$.
\end{proposition}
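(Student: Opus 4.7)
The plan is to verify the realizability criterion of Edmonds and Ewing \cite{EE} for the data $\mathcal{D}$ on $X$. Their criterion gives necessary and sufficient conditions for a proposed fixed-point data to be realized by a homologically trivial, pseudofree, locally linear $\mathbb{Z}_p$-action on a given simply connected closed topological $4$-manifold; loosely, it consists of a $G$-signature identity, an Euler-characteristic congruence modulo $p$, and a further representation-theoretic condition on the combined data. The key point is that every building block $\mathbb{C}P^2_{\alpha_i}$, $\overline{\mathbb{C}P^2_{\alpha'_j}}$, $S^4_{\beta_k}$ already carries a smooth (hence locally linear) $\mathbb{Z}_p$-action realizing its own fixed-point data, so its local contribution to any obstruction vanishes automatically; by Definition \ref{defn:cancel} a cancelling pair has the same local data as some $S^4_\beta$ and therefore contributes in the same way.

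For the $G$-signature identity I sum the local contributions fixed-point by fixed-point: the three fixed points of $\mathbb{C}P^2_\alpha$ contribute $\sigma(\mathbb{C}P^2)=+1$, those of $\overline{\mathbb{C}P^2_{\alpha'}}$ contribute $-1$, those of $S^4_\beta$ contribute $0$, and each cancelling pair contributes $0$. Summing over $\mathcal{D}$ yields $m-m'=\sigma(X)$, which is the identity required of a homologically trivial action. For the Euler congruence a direct count gives $\chi(X)-|\mathcal{D}|=4s$, so the condition $|\mathcal{D}|\equiv\chi(X)\pmod p$ reduces to $p\mid s$; this is arranged in the applications of later sections by organizing the cancelling pairs into batches using the constructions of Example \ref{ex:cancel}.

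For the representation-theoretic condition I plan to argue that it decomposes as a sum of local contributions over the fixed points, with each building block contributing zero (because the block itself realizes its own data smoothly) and each cancelling pair contributing zero (since its local data is that of $S^4_\beta$). The main obstacle is making this decomposition rigorous: one must verify directly from Edmonds and Ewing's formulation that their obstruction is local and additive over fixed points, so that the contribution of each block and of each cancelling pair can be isolated. Once this is in place, the vanishing propagates to the combined data $\mathcal{D}$ on $X$, and the Edmonds-Ewing theorem immediately produces the required homologically trivial, pseudofree, locally linear $\mathbb{Z}_p$-action on $X$ with fixed-point data $\mathcal{D}$.
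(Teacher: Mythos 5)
Your overall strategy is the paper's: feed the data $\mathcal D$ into the Edmonds--Ewing realization criterion, using that each building block is an honest linear action and that a cancelling pair carries the data of some $S^4_\beta$. But two of the three conditions are mishandled. First, the fixed-point-count condition: for a homologically trivial pseudofree $\mathbb Z_p$-action on a closed simply connected $4$-manifold the number of fixed points must equal $\chi(X)$ \emph{exactly} (Lefschetz: each isolated fixed point of a locally linear pseudofree action has local index $+1$), not merely be congruent to $\chi(X)$ mod $p$. Since $\mathcal D$ has $3m+3m'+2r-2s$ points, the numerical hypothesis is to be read as saying precisely that this number is $\chi(X)$ (this is how the proof and every application in the paper uses it; e.g.\ for $K3$ one has $m=r=0$, $m'=16$, $s=12$, $\chi=24$). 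Your patch --- declaring the condition to be a congruence mod $p$ and then demanding $p\mid s$, ``arranged by organizing the cancelling pairs into batches'' --- is not a correct reading of the criterion and is also incompatible with the intended applications ($s=12$, $p=11$ in the $K3$ case), so as written your argument does not establish the REP condition and would impose an extra hypothesis that the proposition does not have.

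Second, the torsion condition TOR is not a ``local contribution vanishes at each fixed point'' statement, so the plan of showing each block and each cancelling pair ``contributes zero'' cannot work even once you make locality precise (and you flag that step as an unresolved obstacle). The condition, in the form of Application~8.6 of \cite{EE} for homologically trivial actions, requires a certain product $L(\mathcal D)$ of cyclotomic units taken over the fixed points to be equivalent (up to the allowed ambiguity) to a specific sign depending on the target manifold, namely $(-1)^{b_2^-(X)+1}$. The paper exploits exactly the multiplicativity you hope for, but the content is a sign computation: $L(\mathcal D)=L(Y)/L(Z)$ with $L(Y)\approx(-1)^{m+r}$ and $L(Z)\approx(-1)^{s}$, so $L(\mathcal D)\approx(-1)^{m+r-s}$, and one must then check the parity identity $m+r-s\equiv b_2^-(X)+1 \pmod 2$, which follows from the two numerical constraints relating $m,m',r,s$ to $\sigma(X)$ and $\chi(X)$. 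Your proposal contains neither the evaluation of the blockwise units nor this parity bookkeeping, and the ``vanishing'' framing would give the wrong target value (it is not independent of $X$). So the proposal captures the right framework but leaves genuine gaps at both REP and TOR.
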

\begin{proof}
We check 
 that the data $\mathcal D$ satisfies the three conditions 
 REP, GSF, and TOR in \cite{EE}.
We write $Y$ for the disjoint union in the statement 
 of the theorem.
Let $\gamma_l$ $(1\leq l\leq s)$ 
 be weights of the $s$ cancelling pairs on $Y$, 
 and we write $Z$ for the disjoint union 
 \[
  Z=\coprod_{1 \leq l \leq s}S^4_{\gamma_l}.
 \]

Since the number of fixed points of $\mathcal D$ is 
 $3m+3m'+2r-2s=\chi(X)$, 
 $\mathcal D$ satisfies the condition REP 
 for homologically trivial action on $X$.

The right-hand side of GSF for $\mathcal D$ is 
 the difference between those for the fixed point data of $Y$ and of $Z$.
This is equal to $\sigma(Y)-\sigma(Z)$
 since the condition GSF is true for both $Y$ and $Z$.
By the assumption of the proposition, 
 $\sigma(Y)-\sigma(Z)=m-m'=\sigma(X)$.
So $\mathcal D$ satisfies also the condition GSF 
 for homologically trivial action on $X$.

The condition TOR is equivalent to 
 the equation of Application~8.6 in \cite{EE} 
 for homologically trivial action.
Let $L(\mathcal D)$, $L(Y)$, and $L(Z)$ be 
 the left-hand sides of the equations for $\mathcal D$, 
 for the fixed point data of $Y$, and for that of $Z$ respectively.
Note that we have $L(\mathcal D) =L(Y)/L(Z)$.
Since the fixed point data of $Y$ and that of $Z$ satisfy
% the equation,
\[
 L(Y)\approx(-1)^m\times(-(-1))^{m'}\times(-1)^r=(-1)^{m+r}
 \quad\textrm{and}\quad
 L(Z)\approx(-1)^s,
\]
we obtain
\[
 L(\mathcal D)\approx(-1)^{m+r-s}=(-1)^{b_2^-(X)+1},
\]
which is the equation for $\mathcal D$.
\end{proof}

%In \cite{E} 
%Edmonds proved Theorem~\ref{thm:exist} 
% using the construction with 
Edmonds used the construction with $(m,m',r,s)=(b_2^+(X),b_2^-(X),0,b_2(X)-1)$ 
 to prove Theorem~\ref{thm:exist} in \cite{E}.
We will use other choices of $(m,m',r,s)$
 to prove Theorems~\ref{thm:main}, \ref{thm:sharp} and \ref{thm:K3}.
%using other choices of $(m,m',r)$.
%by choosing other triplets, 
% which are introduced in the proof of each of them later.

\section{Index of Dirac operator}
In this section, 
 we calculate the dimension 
 of $\mathbb Z_p$-invariant part 
 of the $\mathbb Z_p$-index 
 of the Dirac operator 
 on $X$ 
 for the $\mathbb Z_p$-action given in
Proposition~\ref{prop:exist},
 assuming that 
 $X$ is a spin smooth manifold 
 and that the $\mathbb Z_p$-action is smooth.

Recall that 
 we are assuming that $a_0+a_1+a_2$ is even 
 for a weight $\alpha=(a_0, a_1, a_2)$.
Let $|\alpha|$ be $a_0+a_1+a_2$.
\begin{definition}\label{defn:N}
Define a non-negative integer $N(p,\alpha)$ as 
 the number of ordered triplets of integer $(n_0, n_1, n_2)$ 
 satisfying 
 \[
  n_0, n_1, n_2\geq0,\quad
  n_0+n_1+n_2=\frac{p-3}2, \;\;\textrm{and}\;\;
  a_0n_0+a_1n_1+a_2n_2+\frac{|\alpha|}2\equiv0 \mod p.
 \]
\end{definition}

%Note that,
% in the following lemma,
% the symbol $O(n)$ means 
% both of the complex line bundle of $n$-times tensor of 
% the dual of tautological line bundle on $\mathbb C P^2$ and 
% the sheaf of holomorphic sections of it.

\begin{lemma}\label{lem:dolbeault}
The dimension of $\mathbb Z_p$-invariant part of the $\mathbb Z_p$-index of 
 the Dolbeault operator on $\mathbb C P^2_\alpha$ 
 with coefficient $\mathcal O(\frac{p-3}2)\otimes\mathbb C_{-\frac{|\alpha|}2}$
 is equal to $N(p,\alpha)$.
\end{lemma}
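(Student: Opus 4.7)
The plan is to compute the $\mathbb{Z}_p$-index by invoking the Dolbeault-to-sheaf-cohomology identification, applying Kodaira vanishing to reduce to $H^0$, and then counting weights of monomials explicitly. First, I would recall that the Dolbeault index on a compact complex manifold with coefficients in a holomorphic line bundle $\mathcal{L}$ is, as a virtual $\mathbb{Z}_p$-representation, the Euler characteristic $\sum_i (-1)^i H^i(\mathbb{C}P^2, \mathcal{L})$. Specializing to $\mathcal{L} = \mathcal{O}((p-3)/2) \otimes \mathbb{C}_{-|\alpha|/2}$ with $(p-3)/2 \geq 0$, and noting that $\mathbb{C}_{-|\alpha|/2}$ is topologically trivial, the Bott formula (equivalently Kodaira vanishing) gives $H^i(\mathbb{C}P^2, \mathcal{L}) = 0$ for $i > 0$. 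Hence the $\mathbb{Z}_p$-index equals $H^0(\mathbb{C}P^2, \mathcal{O}((p-3)/2)) \otimes \mathbb{C}_{-|\alpha|/2}$ as a $\mathbb{Z}_p$-representation.

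Next, I would identify $H^0(\mathbb{C}P^2, \mathcal{O}((p-3)/2))$ with the space of homogeneous polynomials of degree $(p-3)/2$ in the homogeneous coordinates $z_0, z_1, z_2$, which here sit in the representation $\mathbb{C}_{a_0} \oplus \mathbb{C}_{a_1} \oplus \mathbb{C}_{a_2}$. The monomials $z_0^{n_0} z_1^{n_1} z_2^{n_2}$ with $n_i \geq 0$ and $n_0 + n_1 + n_2 = (p-3)/2$ form a weight basis. With the pullback action of $\mathbb{Z}_p$ on sections and a suitable sign convention, each such monomial is an eigenvector whose weight modulo $p$, after the further twist by $\mathbb{C}_{-|\alpha|/2}$, is congruent to $-(a_0 n_0 + a_1 n_1 + a_2 n_2 + |\alpha|/2)$.

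The dimension of the $\mathbb{Z}_p$-invariant subspace will then be the number of monomials for which this weight vanishes modulo $p$, namely the number of triples $(n_0, n_1, n_2)$ of non-negative integers satisfying $n_0 + n_1 + n_2 = (p-3)/2$ and $a_0 n_0 + a_1 n_1 + a_2 n_2 + |\alpha|/2 \equiv 0 \pmod{p}$, which is precisely $N(p, \alpha)$ by Definition~\ref{defn:N}. The only obstacle I anticipate is bookkeeping: fixing the sign convention for the pullback action on sections and confirming that the twist $\mathbb{C}_{-|\alpha|/2}$ enters additively in the weight as claimed, so that the congruence comes out in the form required by Definition~\ref{defn:N}. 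Once these conventions are pinned down, the counting argument is routine.
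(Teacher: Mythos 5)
Your proposal is correct and follows essentially the same route as the paper: identify the $\mathbb Z_p$-index with $H^0(\mathbb C P^2;\mathcal O(\frac{p-3}2))\otimes\mathbb C_{-\frac{|\alpha|}2}$, take the monomial basis $z_0^{n_0}z_1^{n_1}z_2^{n_2}$ with $n_0+n_1+n_2=\frac{p-3}2$, observe that each monomial has weight $-(a_0n_0+a_1n_1+a_2n_2+\frac{|\alpha|}2)$ under the induced action, and count those with weight zero modulo $p$ to get $N(p,\alpha)$. The only difference is cosmetic: you make the vanishing of higher cohomology explicit, which the paper leaves implicit.
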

\begin{proof}
Let $L_\alpha$ be the $\mathbb Z_p$-equivariant line bundle 
 $\mathcal O(\frac{p-3}2)\otimes\mathbb C_{-\frac{|\alpha|}2}$.
The $\mathbb Z_p$-index of the twisted Dolbeault operator is equal to
 \[
  H^0(\mathbb C P^2;\mathcal O(L_\alpha))
  =\textrm{Span}\left\{z_0^{n_0}z_1^{n_1}z_2^{n_2}\;\left|\;
           n_0,n_1,n_2\geq0,\;n_0+n_1+n_2=\frac{p-3}2\right.\right\}
 \]
 with $\mathbb Z_p$-action
 \begin{align*}
  \mbox{}&
  g[z_0,z_1,z_2;z_0^{n_0}z_1^{n_1}z_2^{n_2}]
  =
  [g^{a_0}z_0,g^{a_1}z_1,g^{a_2}z_2;
   g^{-\frac{|\alpha|}2}z_0^{n_0}z_1^{n_1}z_2^{n_2}]
  \\&=
  [g^{a_0}z_0,g^{a_1}z_1,g^{a_2}z_2;
  g^{-\sum a_in_i-\frac{|\alpha|}2}
  (g^{a_0}z_0)^{n_0}(g^{a_1}z_1)^{n_1}(g^{n_2}z_2)^{n_2}].
 \end{align*}
This completes the proof.
\end{proof}

\begin{proposition}\label{prop:ind}
Let $X$ be a closed, simply connected, spin smooth $4$-manifold.
% which has a smooth structure.
Suppose a $\mathbb Z_p$-action constructed 
% by the way of 
 in Proposition~\ref{prop:exist} 
 is smooth with respect to some smooth structure of $X$.
Then the action has the unique lift to the spin structure on $X$ and
 the dimension of $\mathbb Z_p$-invariant part of the $\mathbb Z_p$-index of 
 the Dirac operator on $X$ is equal to
 \[
  \sum_{i=1}^mN(p,\alpha_i)-\sum_{j=1}^{m'}N(p,\alpha'_j)-\frac{\sigma(X)}8p.
 \]
\end{proposition}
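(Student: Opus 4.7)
The plan is to apply the equivariant Atiyah-Singer index theorem for the spin Dirac operator and reduce the fixed-point contributions to the counts computed in Lemma~\ref{lem:dolbeault}. First, the unique spin lift: since $X$ is simply connected its spin structure is unique, and for $p$ odd the groups $H^1(\mathbb Z_p;\mathbb Z/2)$ (classifying lifts) and $H^2(\mathbb Z_p;\mathbb Z/2)$ (obstruction) both vanish, so the action has a unique lift to the spin principal bundle.

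Next, by pseudofreeness the fixed points of each $g\in\mathbb Z_p\setminus\{1\}$ are isolated, and the $G$-Spin (Atiyah-Bott-Lefschetz) formula together with averaging over $\mathbb Z_p$ yields
\[
\dim\mathrm{ind}(D_X)^{\mathbb Z_p}=-\frac{\sigma(X)}{8p}+\frac{1}{p}\sum_{g\neq1}\sum_{x\in X^{\mathbb Z_p}}\frac{g^{(a_x+b_x)/2}}{(1-g^{a_x})(1-g^{b_x})},
\]
with half-weights read modulo $p$ and the first term equal to $\mathrm{ind}(D_X)/p$ by Atiyah-Singer. I would then partition $X^{\mathbb Z_p}$ according to Proposition~\ref{prop:exist}: three fixed points from each $\mathbb C P^2_{\alpha_i}$, three from each $\overline{\mathbb C P^2_{\alpha'_j}}$, two from each $S^4_{\beta_k}$, and subtract two per cancelling pair. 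For the three fixed points of a $\mathbb C P^2_\alpha$-piece, the Atiyah-Bott formula applied to the twisted Dolbeault operator with coefficient $\mathcal O((p-3)/2)\otimes\mathbb C_{-|\alpha|/2}$, together with Lemma~\ref{lem:dolbeault} and the ordinary index $(p^2-1)/8$, gives
\[
\sum_{g\neq1}\mathrm{ind}(g,\bar\partial_{L_\alpha})=pN(p,\alpha)-\frac{p^2-1}{8}.
\]

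The main step is to identify this Dolbeault Lefschetz sum with the Dirac Lefschetz sum at the corresponding three fixed points in $X$: the specific twist in Lemma~\ref{lem:dolbeault} is chosen so that the fixed-point characters agree after reading all weights modulo $p$. An analogous identity with sign flip holds for $\overline{\mathbb C P^2_{\alpha'_j}}$ (giving $-pN(p,\alpha'_j)+(p^2-1)/8$), while for the $S^4_{\beta_k}$-pieces and cancelling pairs the two polar fixed points carry orientation-reversed tangent representations and their Dirac Lefschetz contributions cancel. Summing all pieces and using $m-m'=\sigma(X)$ collects the $(p^2-1)/8$ terms with total coefficient $\sigma(X)$, which together with the ordinary $-\sigma(X)/(8p)$ contribution assembles into $-\sigma(X)p/8$ via the identity $\sigma(X)/(8p)\cdot[1+(p^2-1)]=\sigma(X)p/8$. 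The main obstacle is the identification step: verifying that the chosen twist corresponds to the spin lift on $X$ at each fixed point inherited from $\mathbb C P^2_\alpha$, which requires tracking how the global spin structure restricts locally through Edmonds's equivariant construction and carefully matching the half-weight data coming from the spin lift to the $\mathcal O((p-3)/2)\otimes\mathbb C_{-|\alpha|/2}$ character.
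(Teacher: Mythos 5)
Your plan is essentially the paper's argument, written out in a more explicitly local form: the paper packages your fixed-point bookkeeping by comparing $X'=X\sqcup Z$ with the model $Y=\coprod_i\mathbb C P^2_{\alpha_i}\sqcup\coprod_j\overline{\mathbb C P^2_{\alpha'_j}}\sqcup\coprod_k S^4_{\beta_k}$, observing that $\ind_gD_{X'}=\ind_gD_Y$ for every $g\neq1$ (localization/Atiyah--Segal--Lefschetz), so that $\dim(\ind_{\mathbb Z_p}D_X)^{\mathbb Z_p}-\dim(\ind_{\mathbb Z_p}D_Y)^{\mathbb Z_p}=\frac1p\left(\ind D_{X'}-\ind D_Y\right)=-\sigma(X)\frac p8$, without ever writing an explicit local Lefschetz term; your componentwise identities (e.g.\ $\sum_{g\neq1}\ind_g=pN(p,\alpha)-\frac{p^2-1}8$ per $\mathbb C P^2_\alpha$) are the same computation unpacked, and your final arithmetic is right. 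The step you flag as the main obstacle is precisely where the paper does its small amount of work: it shows that $L_\alpha^{\otimes2}$ is $\mathbb Z_p$-equivariantly isomorphic to $K\otimes\mathcal O(p)$ and that $\mathcal O(p)$ is equivariantly trivial near the fixed points, so $L_\alpha$ is an equivariant square root of $K$ there and the spin$^c$ structure is equivariantly spin near the fixed point set. Note also that your worry about tracking the global spin structure of $X$ through Edmonds's construction is unnecessary: since $p$ is odd, the lift of each isolated isotropy representation to the spin structure is unique, so the Dirac local contribution at a fixed point is determined by the rotation numbers alone; once $L_\alpha$ is known to be an equivariant square root of $K$ near the fixed points (a one-line character check, e.g.\ at $[1,0,0]$ both $L_\alpha^{\otimes2}$ and $K$ have character $3a_0-|\alpha|$), the matching of your Dolbeault and Dirac Lefschetz terms is automatic and purely local. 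With that verification supplied, your proposal closes up into a correct proof along the same lines as the paper's.
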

\begin{proof}
Let $Y$ and $Z$ be as in the Proof of Theorem~\ref{prop:exist},
 and $X'$ the disjoint union of $X$ and $Z$.
Note that $Z$ and $X'$ are spin.
In general, since $p$ is odd, 
 $\mathbb Z_p$-actions on spin manifolds
 have unique lift to spin structures. 
%and $H^1(X;\mathbb Z_2)=H^1(Z;\mathbb Z_2)=H^1(X';\mathbb Z_2)=0$, 
%we have unique lifts of 
%the $\mathbb Z_p$-actions% on $X$, $Z$, and $X'$
%the spin structures on $X$, $Z$, and $X'$ respectively.
%
Let $D_X$, $D_Z$ and $D_{X'}$ be the Dirac operators
 on $X$, $Z$ and $X'$ respectively.
Since $\ind_{\mathbb Z_p}D_Z=0$ 
% as representation of $\mathbb Z_p$, 
 we have
 $\ind_{\mathbb Z_p}D_{X'}=\ind_{\mathbb Z_p}D_X$.
We construct below 
 a $\mathbb Z_p$-equivariant spin$^c$-structure on $Y$ 
 so that it is $\mathbb Z_p$-equivariantly spin 
 on a neighborhood of the fixed point set,
 and compare $\ind_{\mathbb Z_p}D_{X'}$ with
 the $\mathbb Z_p$-index of the spin$^c$-Dirac operator of it.
%We compare $\ind_{\mathbb Z_p}D_{X'}$ 
% with the $\mathbb Z_p$-index of the spin$^c$-Dirac operator %on $Y$
% for a $\mathbb Z_p$-equivariant spin$^c$-structure on $Y$,
% which we construct below so that
% it is $\mathbb Z_p$-equivariantly spin 
% on a neighborhood of the fixed point set.

The $\mathbb Z_p$-equivariant spin$^c$-structure 
 on each $\mathbb C P^2_{\alpha_i}$-component of $Y$ is 
 defined so that its spin$^c$-Dirac operator is
 identified with the Dolbeault operator twisted by $L_{\alpha_i}$ 
% $O(\frac{p-3}2)\otimes\mathbb C_{-\frac{a_0+a_1+a_2}2}$ 
 in Lemma~\ref{lem:dolbeault}.
If we write $K$ for the canonical line bundle of $\mathbb C P^2$,
 then the square of $L_{\alpha_i}$ is
 $\mathbb Z_p$-equivariantly isomorphic to
 $K \otimes \mathcal O(p)$.
Since $\mathcal O(p)$ is $\mathbb Z_p$-equivariantly trivial
 on a neighborhood of the fixed point set,
 $L_\alpha$ is a $\mathbb Z_p$-equivariant square root of $K$ there.
It implies that 
 the spin$^c$-structure is 
 $\mathbb Z_p$-equivariantly spin on a neighborhood of the fixed point set.
The $\mathbb Z_p$-equivariant spin$^c$-structure 
 on each $\overline{\mathbb C P^2_{\alpha'_j}}$-component of $Y$ is
 defined so that the spin$^c$-Dirac operator is
 identified with the same twisted Dolbeault operator with 
 opposite parity of degree.
The $\mathbb Z_p$-equivariant spin$^c$-structure 
 on each $S^4_{\beta_k}$-component of $Y$ is
 defined as $\mathbb Z_p$-equivariant spin structure.

Let $D_Y$ be the spin$^c$-Dirac operator 
 on the spin$^c$-structure defined as above.
Since the spin action on $X'$ is isomorphic to the spin$^c$ action on $Y$ 
 on neighborhoods of their fixed point sets,
 $\ind_gD_{X'}$ and $\ind_gD_Y$ 
% the characters of $\mathbb Z_p$-indices for their Dirac operators
 coincide for $g \neq 1 \in \mathbb Z_p$.
This is a consequence of the
 localization of the equivariant indices
 as elements of some localization of 
 equivariant $K$-groups of the neighborhoods of
 the fixed point sets, or one could also see it
 from the Atiyah-Segal-Lefschetz formula.
Hence we first obtain
 \begin{align} \label{equation 1}
  \dim(\ind_{\mathbb Z_p}D_{X'})^{\mathbb Z_p}-\dim(\ind_{\mathbb Z_p}D_Y)^{\mathbb Z_p} 
  &
  =
  \frac1p\sum_{g\in\mathbb Z_p}\ind_gD_{X'}-
  \frac1p\sum_{g\in\mathbb Z_p}\ind_gD_Y\nonumber
  \\ 
  & =
  \frac1p\left(\ind_1 D_{X'}-\ind_1 D_Y\right)\nonumber\\ 
  &=\frac1p\left(\ind D_{X'}-\ind D_Y\right).
 \end{align}
Secondly, from the Hirzebruch signature theorem and 
 a direct calculation, 
 we have
 \[
  \left\{
  \begin{array}{lll}
   \ind D_{X'}&=&-\dfrac{\sigma(X')}{8}=-\dfrac{\sigma(X)}{8},\\
   \ind D_Y
   &=&
   (m-m')\dim H^0\left(\mathbb C P^2;\mathcal O(\tfrac{p-3}2)\right)
   =
   \sigma(X)\dfrac{p^2-1}8,
 \end{array}\right.
 \]
 which implies
 \begin{equation}\label{equation 2}
  \frac1p\left(\ind D_{X'}-\ind D_Y\right)=
  -\sigma(X)\frac{p}8,
 \end{equation}
%These two equations imply
% \begin{align*}
% \dim(\ind_{\mathbb Z_p}D_{X'})^{\mathbb Z_p}
% &
% =
% \frac1p\sum_{g\in\mathbb Z_p}\ind_gD_{X'}
% \\&
% =
% \frac1p\left(\ind D_{X'}+\sum_{g\in\mathbb Z_p}\ind_gD_Y-\ind D_Y\right)
% \\&
% =
% \dim(\ind_{\mathbb Z_p}D_Y)^{\mathbb Z_p}-\frac{\sigma(X)}8p.
% \end{align*}
%Applying 
Thirdly, applying Lemma~\ref{lem:dolbeault}
 to each component of $Y$, 
% to the first term of the last formula,
 we have
 \begin{equation}  \label{equation 3}
  \dim(\ind_{\mathbb Z_p}D_Y)^{\mathbb Z_p}
  =
  \sum_{i=1}^mN(p,\alpha_i)-\sum_{j=1}^{m'}N(p,\alpha'_j).
 \end{equation}
% which completes the proof.
Now the equations\eqref{equation 1}, \eqref{equation 2} 
 and \eqref{equation 3} imply the required formula.
\end{proof}
\begin{remark}
The dimension of $\mathbb Z_p$-invariant part of the $\mathbb Z_p$-index of 
 the Dirac operator on $X$ is 
 nothing but the index of the Dirac operator 
 on the quotient spin V-manifold $X/\mathbb Z_p$.
\end{remark}

\section{Nonsmoothability}
In this section,
 we prove Theorem~\ref{thm:main} 
 choosing appropriate weights 
 and using 
 {\it the $10/8$-type inequality 
 for the quotient V-manifold $X/\mathbb Z_p$} 
 in \cite{FF}.

\begin{theorem}\label{thm:smooth}
Let $X$ be a closed, simply connected, spin smooth $4$-manifold
 not homeomorphic to $S^4$.
Suppose the integers $m$, $m'$, $r$, $s$ and $\mathbb Z_p$-manifolds
 $\mathbb C P^2_{\alpha_i}$ $(1\leq i \leq m)$,  
 $\overline{\mathbb C P^2_{\alpha'_j}}$ $(1\leq j \leq m')$,
 $S^4_{\beta_k}$ $(1\leq k\leq r)$ satisfy
 the assumption of Proposition~\ref{prop:exist}.
If the $\mathbb Z_p$-action on $X$ constructed in Proposition~\ref{prop:exist}
 is smooth with respect to some smooth structure of $X$, 
 then the inequality
 \[
  -b_2^-(X)<
   \sum_{i=1}^mN(p,\alpha_i)
   -\sum_{j=1}^{m'}N(p,\alpha'_j)
   -\frac{\sigma(X)}8p
  <b_2^+(X)
 \]
 holds.
%% where $b_2^\pm(X)=\dim H^2_\pm(X;\mathbb R)$ respectively.
% where $b_2^\pm(X)$ are 
% the dimensions of $H^2_\pm(X;\mathbb R)$, 
% the maximal subspaces of $H^2(X;\mathbb R)$ 
% on which the cup product are positive and negative definite respectively.
%% of the spaces of self-dual and anti-self-dual harmonic 2-forms 
%% on $X$ with some Riemannian metric respectively.
\end{theorem}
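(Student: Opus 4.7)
The plan is to reinterpret the middle quantity of the inequality as the ordinary index of the Dirac operator on the quotient spin V-manifold $X/\mathbb{Z}_p$, and then apply the $10/8$-type inequality for V-manifolds from \cite{FF}.

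By Proposition~\ref{prop:ind}, the smoothness hypothesis implies that
$$\sum_{i=1}^m N(p,\alpha_i)-\sum_{j=1}^{m'} N(p,\alpha'_j)-\frac{\sigma(X)}{8}p = \dim(\ind_{\mathbb{Z}_p} D_X)^{\mathbb{Z}_p},$$
and by the remark that follows Proposition~\ref{prop:ind}, this is precisely the Dirac index $\ind D_{X/\mathbb{Z}_p}$ on the quotient spin V-manifold. The existence of the spin V-structure is ensured by the canonical lift of the $\mathbb{Z}_p$-action to the spin structure on $X$ (itself established in Proposition~\ref{prop:ind}), together with the pseudofreeness and local linearity of the action, which give well-defined orbifold local models at each fixed point.

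Next I identify the positive and negative Betti numbers of $X/\mathbb{Z}_p$ with those of $X$. Since the action is homologically trivial and $X$ is simply connected, one has $H^\ast(X/\mathbb{Z}_p;\mathbb{Q})\cong H^\ast(X;\mathbb{Q})^{\mathbb{Z}_p}=H^\ast(X;\mathbb{Q})$. The intersection form on $H^2(X/\mathbb{Z}_p;\mathbb{Q})$ differs from that of $X$ by the factor $1/p$, hence has the same signature decomposition; in particular $b_2^\pm(X/\mathbb{Z}_p)=b_2^\pm(X)$. Because $X\not\cong S^4$, we have $b_2(X)>0$, so the V-manifold $X/\mathbb{Z}_p$ is nontrivial in the sense relevant for the $10/8$ inequality.

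Finally, I apply the $10/8$-type inequality for closed spin V-manifolds proved in \cite{FF} to $X/\mathbb{Z}_p$. This yields the strict estimate
$$-b_2^-(X/\mathbb{Z}_p)<\ind D_{X/\mathbb{Z}_p}<b_2^+(X/\mathbb{Z}_p),$$
which, after substituting the identifications of the previous two paragraphs, is exactly the claimed inequality. The main obstacle is verifying that the hypotheses of the V-manifold refinement of Furuta's inequality are met by our quotient: one must confirm that the local isotropy representations at the fixed points produce orbifold singularities of the admissible type in \cite{FF}, and that the strict form of the $10/8$ inequality applies given $X\not\cong S^4$. Once this is checked, the argument consists essentially of chaining the identifications above with the cited inequality.
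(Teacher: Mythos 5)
Your overall strategy---identifying the middle quantity with $\dim(\ind_{\mathbb Z_p}D_X)^{\mathbb Z_p}$ via Proposition~\ref{prop:ind} and then invoking the Fukumoto--Furuta $10/8$-type inequality---is the same as the paper's, but as written there are genuine gaps. The inequality of \cite{FF}, in the form the paper uses it, is one-sided: for a finite group acting smoothly on a closed spin $4$-manifold preserving orientation and spin structure it gives $\dim(\ind_G D)^G<\dim_{\mathbb R}H^2_+(X;\mathbb R)^G$, and only under the hypothesis that the right-hand side is nonzero. You assert a two-sided estimate $-b_2^-<\ind D_{X/\mathbb Z_p}<b_2^+$ as though it were the cited statement; to obtain the lower bound one must apply the one-sided inequality a second time to $X$ with the reversed orientation (which interchanges the roles of the $\alpha_i$ and $\alpha'_j$ and flips the sign of $\sigma(X)$), and this requires $b_2^-(X)>0$ in addition to $b_2^+(X)>0$. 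Your justification ``since $X\not\cong S^4$ we have $b_2(X)>0$'' does not deliver this: positivity of $b_2$ alone does not exclude a definite intersection form, and if $b_2^+(X)=0$ (or $b_2^-(X)=0$) the inequality simply does not apply and the claimed strict bound can fail. The paper closes exactly this point with Donaldson's theorem \cite{D0}: a smooth, simply connected, spin $4$-manifold with definite form has $b_2=0$, hence is homeomorphic to $S^4$; therefore $b_2^+(X),b_2^-(X)>0$ and both applications of the inequality are legitimate. Without this input, and without the orientation-reversal step, the two-sided inequality is unproved.

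A secondary issue is that you route the argument through the quotient spin V-manifold $X/\mathbb Z_p$ and yourself flag as unverified that its singularities are of the type admissible for the V-manifold $10/8$ inequality and that the strict form applies. The paper avoids this verification burden by quoting the equivariant statement of \cite{FF} directly for a smooth finite group action on a smooth closed spin $4$-manifold; the identification of $\dim(\ind_{\mathbb Z_p}D_X)^{\mathbb Z_p}$ with the V-manifold index is only a remark, not a step the proof depends on. So the concrete missing ingredients are: Donaldson's theorem to secure $b_2^\pm(X)>0$, the orientation-reversed application of the one-sided inequality to get the left-hand bound, and (in your formulation) the unchecked hypotheses of the V-manifold version.
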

\begin{proof}
In general when a finite group $G$ acts 
 on a closed, spin smooth $4$-manifold $W$ 
 preserving its orientation and the spin structure, 
 Y.~Fukumoto and M.~Furuta \cite{FF} showed the inequality
 \[
  \dim(\ind_GD)^G<\dim_\mathbb R H^2_+(W;\mathbb R)^G
 \]
 %proved by using Seiberg-Witten theory.
 when the right-hand side is not zero,
 %for the $G$-equivariant Dirac operator $D$ on $W$.
 where $D$ is the $G$-equivariant Dirac operator on $W$.
In our case,
 since the action is homologically trivial,
 the right-hand side for $W=X$ is equal to $b_2^+(X)$.
When $X$ is a spin smooth manifold not homeomorphic to $S^4$,
then a theorem of S.~K.~Donaldson \cite{D0} implies $b_2^+(X),b_2^-(X)>0$.
Therefore 
 if the action is smoothable, we have the above inequality.
Using the formula given by Proposition~\ref{prop:ind},
 we can write the inequality as 
 \[
  \sum_{i=1}^mN(p,\alpha_i)-\sum_{j=1}^{m'}N(p,\alpha'_j)-\frac{\sigma(X)}8p
  <b_2^+(X).
 \]
Reversing the orientation of $X$, 
 we similarly 
 obtain 
 \[
  \sum_{j=1}^{m'}N(p,\alpha'_j)-\sum_{i=1}^mN(p,\alpha_i)+\frac{\sigma(X)}8p
  <b_2^-(X).
 \]
\end{proof}

\begin{proof}[Proof of Theorems~\ref{thm:main} and \ref{an estimate}]
%We already proved in Theorem~\ref{thm:exist} 
% the case when $X$ has no smooth structure.
When $X$ has no smooth structure
 Theorem~\ref{thm:main} is included in Edmonds's Theorem~\ref{thm:exist}.
So we assume below 
 that $X$ has a smooth structure.
We also $\sigma(X)\leq0$ 
 giving the opposite orientation to $X$ if necessary.

To construct an action 
 which does not satisfy the inequality of Theorem~\ref{thm:smooth}
 we choose different triplet $(m, m', r, s)$ from 
 that used by Edmonds in \cite{E}.% the proof of his theorem.
\begin{lemma}\label{lem:spin}
Let $p$ be a prime number not less than $5$,
 and $X$ a closed, simply connected, spin smooth $4$-manifold 
 with $\sigma(X)\leq0$ not homeomorphic to $S^4$ or $S^2\times S^2$.
Then there exist  weights
 \[
  \alpha'_1,\;\alpha'_2,\; \ldots,\;\alpha'_{-\sigma(X)}
 \]
 for $\overline{\mathbb C P^2}$
 satisfying the following property:
For any weights $\alpha_0$ and $\alpha'_0$,
 the $\mathbb Z_p$-manifolds 
 $\mathbb C P^2_{\alpha_0}$,
 $\overline{\mathbb C P^2}_{\alpha'_j}$ $(0\leq j\leq -\sigma(X))$
 and $S^4_{\beta_k}$ $(1\leq k\leq r)$ 
 for some non-negative integer $r$ 
 and some weights $\beta_k$ $(1\leq k\leq r)$ 
% some numbers of some $\mathbb Z_p$-equivariant $4$-spheres
 satisfy
 the assumption of Proposition~\ref{prop:exist}. 
% for $m=1$ and $m'=-\sigma(X)+1$.
\end{lemma}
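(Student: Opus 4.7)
The plan is to apply Proposition~\ref{prop:exist} with $m=1$ and $m'=1+|\sigma(X)|$, so that $m-m'=\sigma(X)$ automatically; the Euler characteristic constraint then forces
\[
  s \;=\; |\sigma(X)| + 2 + r - b_2^+(X),
\]
leaving $r$ and the $\beta_k$'s as the only remaining freedom. For $1\leq j\leq |\sigma(X)|$ I would choose
\[
  \alpha'_j \;=\; (-1,\, R(j),\, R(j)+1),
\]
where $R(j)$ is the remainder of $j$ modulo $p-3$, so that by Example~\ref{ex:cancel}(4) the disjoint union $\coprod_{j=1}^{|\sigma(X)|}\overline{\mathbb C P^2_{\alpha'_j}}$ already contains $|\sigma(X)|-1$ cancelling pairs from the chain construction; crucially these pairs are intrinsic to the $\alpha'_j$'s and are present regardless of the later choice of $\alpha_0,\alpha'_0$.

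Next I would split by the sign of $\sigma(X)$. When $\sigma(X)=0$, the even indefinite unimodular intersection form of $X$ must be $nH$ with $n=b_2^+(X)\geq 2$ since $X$ is not homeomorphic to $S^4$ or $S^2\times S^2$; the list of $\alpha'_j$'s is empty, and taking $r=n-2$ with any pseudofree weights $\beta_k$ (say $\beta_k=(1,1)$) yields $s=0$, so no cancelling pairs are needed, and the leftover fixed points total $3+3+2(n-2)=2n+2=\chi(X)$. When $\sigma(X)<0$, Rokhlin's theorem gives $|\sigma(X)|\geq 16$, and Furuta's $10/8$-inequality for smooth closed spin $4$-manifolds supplies $b_2(X)\geq \tfrac{10}{8}|\sigma(X)|+2$, which rearranges to $b_2^+(X)\geq |\sigma(X)|/8+1\geq 3$. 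Taking $r=b_2^+(X)-3$ and any pseudofree $\beta_k$'s then produces $s=|\sigma(X)|-1$, matched exactly by the chain cancelling pairs of Example~\ref{ex:cancel}(4); the leftover fixed points from $\mathbb C P^2_{\alpha_0}$, $\overline{\mathbb C P^2_{\alpha'_0}}$, the $\alpha'_j$'s, and the $S^4_{\beta_k}$'s again sum to $\chi(X)$.

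The key obstacle is securing $b_2^+(X)\geq 3$ in the second case: without this bound one would need to produce extra cancelling pairs involving the arbitrary weights $\alpha_0$ or $\alpha'_0$, which cannot be arranged once $\alpha'_1,\ldots,\alpha'_{|\sigma(X)|}$ have been fixed. It is precisely Furuta's $10/8$-inequality, available because $X$ is smooth and spin, that clears this hurdle and allows the simple uniform choice of $r$ and $\beta_k$'s above.
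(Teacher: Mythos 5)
Your construction is essentially the paper's: the same choice $m=1$, $m'=1-\sigma(X)$, the same chain weights $\alpha'_j=(-1,R(j),R(j)+1)$, and the same appeal to Example~\ref{ex:cancel}(4) to supply the cancelling pairs independently of $\alpha_0,\alpha'_0$. (Your relation $s=|\sigma(X)|+2+r-b_2^+(X)$ is the correct one coming from the leftover fixed-point count $3(m+m')+2r-2s=\chi(X)$, which is what the proof of Proposition~\ref{prop:exist} and the paper's own choices actually use; the ``$2(r+s)$'' in the displayed hypothesis is a sign slip.) The genuine difference is how the key bound is obtained and how the cases are split. The paper splits according to whether cancelling pairs are needed at all (Case I: $b_2^+(X)\geq -\sigma(X)+2$, take $s=0$; Case II: $r=0$ and $s=-3\sigma(X)/2+3-\chi(X)/2$) and then derives $b_2^+(X)\geq 3$ from Donaldson's Theorems B and C, which force the form to be $0$, $H$ or $2H$ when $b_2^\pm\leq 2$, the last possibility being excluded by the Case II hypothesis and the first two by the homeomorphism assumptions. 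You instead split by the sign of $\sigma(X)$: for $\sigma(X)=0$ you use only the classification of even indefinite forms plus Freedman (no gauge theory, no cancelling pairs), and for $\sigma(X)<0$ you use Rokhlin together with Furuta's $10/8$-inequality to get $b_2^+(X)\geq |\sigma(X)|/8+1\geq 3$, then take $s=-\sigma(X)-1$ uniformly. Both routes are valid; yours is thematically consistent with the $10/8$-type arguments of the paper but uses a strictly stronger input than necessary (Donaldson's restrictions on even forms with $b_2^\pm\leq 2$ already suffice, and strictly speaking Furuta's theorem in the indefinite form also presupposes Donaldson's exclusion of nonzero even definite forms), while the paper's case split has the small advantage of invoking smoothness only where cancelling pairs are actually required.
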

\begin{proof}
%Let $n$ be a positive integer.
For each $1\leq j \leq -\sigma(X)$
 we write $R(j)$ for the remainder of $j$ divided by $p-3$. 
We show that the weights $\alpha'_j=(-1,R(j),R(j)+1)$ 
 $(1\leq j \leq -\sigma(X))$ satisfy
 the required property.
Recall that,
 in the assumption of Proposition 12, 
 $m$, $m'$, $r$ and $s$ are non-negative integers
 satisfying $m-m'=\sigma(X)$ and $3(m+m')+2(r+s)=\chi(X)$.

Case I:  If $-3\sigma(X)+6\leq\chi(X)$, then we take
 \[
  m=1,\quad m'=-\sigma(X)+1,\quad 
  r=\frac{3\sigma(X)-6+\chi(X)}2,\quad\textrm{and}\quad s=0.
 \]
%Note that $s=0$ is a consequence of the choice of $r$.
%Then, for $s=0$,
Since we do not require existence of cancelling pairs,
 any choice of $\mathbb Z_p$-manifolds satisfies 
 the assumption of Proposition~\ref{prop:exist}.

Case II:
If $-3\sigma(X)+6>\chi(X)$, then we take
 \[
  m=1,\quad m'=-\sigma(X)+1,\quad r=0, \quad\textrm{and}\quad
  s=\frac{-3\sigma(X)+6-\chi(X)}2.
 \]
%Note that the value of $s$ is a 
%consequence of the choice of $r$ again.
Our assumption implies $s \geq 0$.
We will show the inequality
 $-\sigma(X)-1 \geq s (\geq 0).$
Then the proof will be completed because
 Example~\ref{ex:cancel} (4) tells that, under the
 inequality $-\sigma(X)-1\geq 0$, the number of
 the cancelling pairs is
 at least $-\sigma(X)-1$, and hence at least $s$
 under the inequality $s\leq-\sigma(X)-1$.
Since $-\sigma(X)-1-s=b^+_2(X)-3$,
 it suffices to show $b_2^+(X) \geq 3$.
If not, 
 and if $X$ is smooth, 
 Donaldson's Theorems B and C in \cite{D} imply 
 $b_2^+(X)=b_2^-(X)\leq2$, i.e., $\sigma(X)=0$ and $\chi(X)=2$, $4$, or $6$.
The case $\chi(X)=6$ is excluded from the assumption 
 $-3\sigma(X)+6>\chi(X)$ of Case II.
The cases $\chi(X)=2$ and $4$ are
 also excluded from our assumption
 that $X$ is not homeomorphic to $S^4$ or $S^2\times S^2$.
\end{proof}

We continue to prove %the proof of 
 Theorems~\ref{thm:main} and \ref{an estimate}.

Fix a prime number $p$ not less than 5.
We consider the actions constructed in the proof of Lemma~\ref{lem:spin}.
So we use the notation there.
Choose and fix weights $\alpha'_j$ for $1\leq j\leq-\sigma(X)$ 
as in Lemma~\ref{lem:spin}
 so that the union of fixed points of the $\mathbb Z_p$-manifolds 
 $\overline{\mathbb C P^2_{\alpha'_j}}$ for $1\leq j\leq-\sigma(X)$ 
 has $s$ cancelling pairs.
% which are required in the proof of Lemma~\ref{lem:spin}.
We can choose arbitrarily the rest of weights 
 $\alpha_0$, $\alpha'_0$, and $\beta_k$ for $1\leq k\leq r$.
We would like to choose these weights so that
 the inequality of Theorem~\ref{thm:smooth} is violated.
Since the fixed point data of $S^4_{\beta}$ does not contribute
 to the $\mathbb Z_p$-index of the Dirac operator on $X$,
 what we can effectively control are %is
 the two weights $\alpha_0$ and $\alpha'_0$.

% we have to choose the only two weights $\alpha_0$ and $\alpha'_0$ 
% to violate the inequality of Theorem~\ref{thm:smooth}.
%
If we write $I$ for the integer %Define a integer $I$ by
 \[
  I=-\sum_{j=1}^{-\sigma(X)}N(p,\alpha'_j)-\frac{\sigma(X)}8p,
 \]
then Proposition~\ref{prop:ind} implies 
 \[
  \dim(\ind_{\mathbb Z_p}D_X)^{\mathbb Z_p}
  =
  I+N(p,\alpha_0)-N(p,\alpha'_0).
 \]
On the other hand, a direct calculation shows that
% $N(p,(-1,0,1))$ and $N(p,(-1,1,2))$.
%Then we obtain
\[
%\left\{
\begin{array}{lll} 
 N(p,(-1,0,1)) &=& k\qquad \textrm{for} \quad p=4k\pm1,
\\
%\]
%and
%\[
 &&\\
N(p,(-1,1,2)) &=& \left\{\begin{array}{llcll}
                    l-1&\quad&\textrm{for}&\;&p=12l-5\\
                    l&&\textrm{for}&&p=12l\pm1\\
                    l+1&&\textrm{for}&&p=12l+5
                   \end{array}\right.,
\end{array} %\right.
\]
which implies %Combining these two
\begin{equation}\label{eq:N-N}
 N(p,(-1,0,1))-N(p,(-1,1,2))=2l\qquad\textrm{for}\quad
 p=12l+q\;\;(q=\pm1, \pm5).
\end{equation}
In particular,
 if we choose $\alpha_0=(-1,0,1)$ and $\alpha'_0=(-1,1,2)$, 
 then we have
 \[
  \dim(\ind_{\mathbb Z_p}D_X)^{\mathbb Z_p}=I+2l\quad\textrm{for}\quad
  p=12l+q\;\;(q=\pm1,\pm5),
 \]
%On the other hand, 
 and 
 if we choose $\alpha_0=(-1,1,2)$ and $\alpha'_0=(-1,0,1)$ 
 then we have 
 \[
  \dim(\ind_{\mathbb Z_p}D_X)^{\mathbb Z_p}=I-2l\quad\textrm{for}\quad
  p=12l+q\;\;(q=\pm1,\pm5).
 \]
Therefore
 at least one of the absolute values of the above two 
 is greater than or equal to $2l$.
Hence 
 if $p$ is large enough to satisfy
$2l\geq\max\{b_2^+(X),b_2^-(X)\}$,
or the inequality \eqref{eq:zatu}, then
 one of the above actions does not satisfy 
the inequality given in Theorem~\ref{thm:smooth}.
This implies the action is nonsmoothable.
\end{proof}

\begin{remark}
1.
Our construction is not available to find
 a nonsmoothable $\mathbb Z_5$-action, even if it exists.
It is because $N(5,\alpha)=1$ 
 for any weight $\alpha$, and hence
 $\dim(\ind_{\mathbb Z_5}D_X)^{\mathbb Z_5}=3\sigma(X)/5$ 
 for any $\mathbb Z_5$-action on any spin $4$-manifold
 constructed in Proposition~\ref{prop:exist}.
%We can not find by our method a nonsmoothable $\mathbb Z_5$-action 
% even if such an action exists
% because of the following reason.
%Since 
% $N(5,\alpha)$ is equal to 1 
% for any weight $\alpha$,
% $\dim(\ind_{\mathbb Z_5}D_X)^{\mathbb Z_5}$ is equal to $3\sigma(X)/5$ 
% for any $\mathbb Z_5$-action on any spin $4$-manifold
% constructed by the way of Proposition~\ref{prop:exist}.
This value $3\sigma(X)/5$ satisfies 
 the inequality of Theorem~\ref{thm:smooth}.

%\noindent
2.
Our construction is not available to find
 a nonsmoothable $\mathbb Z_p$-action on $S^2 \times S^2$, 
 even if it exists.
%We can not find by our method a nonsmoothable action on $S^2\times S^2$ 
% even if such an action exists,
It is because 
 the fixed point data of any action on $S^2\times S^2$ 
 constructed 
%by the way of 
 in Proposition~\ref{prop:exist} 
 is realized by a smooth action 
 (Lemma 5.1 in \cite{Wil}).
\end{remark}

\section{Estimate of $p$}
%In the proof of Theorem~\ref{thm:main}
% we showed that if $p$ satisfies
% \begin{equation}\label{eq:zatu}
%  p\geq12\left[\frac{\max\{b_2^+(X),b_2^-(X)\}+1}2\right]-5
% \end{equation}
% then $\mathbb Z_p$ can act on $X$ nonsmoothably,
% where $[a]$ is the maximal integer not greater than $a$.
In the proof of Theorems~\ref{thm:main} and \ref{an estimate}
 in the previous section, we made use of particular choices of
 weights. If we use other choices of weights,
 it is likely that we could construct nonsmoothable
 actions for some other prime numbers as well.
%
%We restricted ourselves to choices of weights within narrow limits 
% in the proof of Theorems~\ref{thm:main} and \ref{an estimate}.
%So we may obtain a nonsmoothable action of a cyclic group
% whose order does not satisfy 
% the inequality \eqref{eq:zatu} of Theorem~\ref{an estimate} 
% by choosing weights more flexibly.
Theorems~\ref{thm:sharp} and \ref{thm:K3} are 
examples of this kind.

\begin{proof}[Proof of Theorem~\ref{thm:sharp}]
%We assume $p\geq 5$. 
We take $m$, $m'$, $r$ and $s$ 
 satisfying $n+1=3m+r$, $0\leq r \leq 2$, $m'=m$ and $s=0$.
%Then $s=0$,
% so any 
Any choice of $\mathbb C P^2_{\alpha_i}$ $(1\leq i\leq m)$, 
 $\overline{\mathbb C P^2_{\alpha'_j}}$ $(1\leq j\leq m)$,
 and $S^4_{\beta_k}$ $(1\leq k\leq r)$ 
 satisfies the assumption of Proposition~\ref{prop:exist}.

Take $\alpha_i=(-1,0,1)$ for every $1 \leq i \leq m$, and
 $\alpha'_j=(-1,1,2)$ for every $1 \leq j \leq m$.
If this action is smooth with respect to some smooth structure on $X$
 then 
 \begin{align*}
  \dim(\ind_{\mathbb Z_p}D_X)^{\mathbb Z_p}
  &=
  mN(p,(-1,0,1))-mN(p,(-1,1,2))
  \\&=2lm
  \quad\textrm{for}\quad p=12l+q\;\;(q=\pm1,\pm5)
 \end{align*}
% for $p=12l+q\;(q=\pm1,\pm5)$ 
 by Proposition~\ref{prop:ind} and the equation \eqref{eq:N-N}.
Hence,
 for any $n$ and $p$ satisfying
 \[
  2lm\geq n=3m+r-1\quad\textrm{for}\;\;p=12l+q\;\;(q=\pm1,\pm5),
 \]
 the action does not satisfy the inequality of Theorem~\ref{thm:smooth}.
 This implies Theorem~\ref{thm:sharp}.
\end{proof}

\begin{proof}[Proof of Theorem~\ref{thm:K3}]
Let $p$ be $11$ 
 and $X$ the topological manifold homeomorphic to $K3$ surface.

For $\alpha'_1=(-1,1,2)$, $\alpha'_2=(-1,2,3)$, and $\alpha'_3=(-1,3,4)$,
 the three $\mathbb Z_p$-manifolds $\overline{\mathbb C P^2_{\alpha'_1}}$,
 $\overline{\mathbb C P^2_{\alpha'_2}}$, and 
 $\overline{\mathbb C P^2_{\alpha'_3}}$ have two cancelling pairs, 
 as in Example~\ref{ex:cancel} (2).
On the other hand,
 for $\alpha'_4=(-2,2,4)$,
 the pair consisting of $[1,0,0]$ on $\overline{\mathbb C P^2_{\alpha'_3}}$ 
 and $[1,0,0]$ on $\overline{\mathbb C P^2_{\alpha'_4}}$ 
 is a cancelling pair.
Hence,
 by taking weights $\alpha'_j$ for $1\leq j\leq16$ as
 \[
  \alpha'_j=\alpha'_k\qquad\textrm{for}\qquad
  j\equiv k\mod 4\qquad\textrm{and}\qquad 1\leq k\leq 4,
 \]
 16 $\mathbb Z_p$-manifolds 
 $\overline{\mathbb C P^2_{\alpha'_j}}$ $(1\leq j\leq16)$ have 
 12 cancelling pairs,
 that is,
 they satisfy the assumption of Proposition~\ref{prop:exist} 
 for $m=r=0$, $m'=16$ and $s=12$.
 
Since 
% direct calculation says
 \[
  N(11,(-1,1.2))=N(11,(-1,2,3))=N(11,(-1,3,4))=N(11,(-2,2,4))=1,
 \]
 if the above action is smooth 
 with respect to some smooth structure of $X$ 
 then
 \[
  \dim(\ind_{\mathbb Z_{11}}D)^{\mathbb Z_{11}}=6
 \]
 by Lemma~\ref{prop:ind}.
The action, therefore, does not satisfy 
 the inequality of Theorem~\ref{thm:smooth}
 because $b_2^+(X)$ is equal to 3.
This implies the action is nonsmoothable.
\end{proof}

\begin{corollary}
We have $11 \, \in \,NS(K3\, \#\, (\#^t S^2 \times S^2))$ for 
$t=1,2$ and $3$.
%11 is also included in 
% $NS(K3\,\sharp\, S^2\times S^2)$, 
% $NS(K3\,\sharp\, S^2\times S^2\,\sharp\, S^2\times S^2)$, and 
% $NS(K3\sharp S^2\times S^2\sharp S^2\times S^2\sharp S^2\times S^2)$.
\end{corollary}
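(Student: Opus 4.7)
The plan is to reuse the construction from the proof of Theorem~\ref{thm:K3} essentially verbatim, padding it with $t$ copies of $S^4$ to absorb the extra Euler characteristic contributed by the $S^2 \times S^2$ summands. Writing $X = K3 \,\#\, (\#^t S^2 \times S^2)$, one has $\sigma(X) = -16$, $\chi(X) = 24 + 2t$, $b_2^+(X) = 3 + t$, and $b_2^-(X) = 19 + t$.

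First I would take the same 16 weights $\alpha'_1,\dots,\alpha'_{16}$ as in the K3 proof, giving the same 12 cancelling pairs among the $\overline{\mathbb C P^2_{\alpha'_j}}$'s, and then adjoin $t$ summands $S^4_{\beta_k}$ with any legitimate weight (say $\beta_k = (1,2)$) whose fixed points are \emph{not} assigned to any cancelling pair. With $(m, m', r, s) = (0, 16, t, 12)$, the arithmetic hypotheses of Proposition~\ref{prop:exist} are immediate: $m - m' = -16 = \sigma(X)$ and $3(m+m') + 2(r - s) = 48 + 2(t - 12) = \chi(X)$; the TOR parity check gives $(-1)^{m + r - s} = (-1)^t = (-1)^{b_2^-(X) + 1}$. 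Hence Proposition~\ref{prop:exist} produces a homologically trivial, pseudofree, locally linear $\mathbb Z_{11}$-action on $X$.

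For nonsmoothability, the key observation is that the $S^4_\beta$ factors contribute nothing to the sums $\sum N(p, \alpha_i) - \sum N(p, \alpha'_j)$ appearing in Proposition~\ref{prop:ind}, since only the $\mathbb C P^2$- and $\overline{\mathbb C P^2}$-pieces enter that formula. Consequently the index computation from the K3 proof carries over unchanged, yielding $\dim(\ind_{\mathbb Z_{11}} D_X)^{\mathbb Z_{11}} = 6$. Since $b_2^+(X) = 3 + t$ equals $4$, $5$, or $6$ for $t = 1$, $2$, or $3$, one has $6 \geq b_2^+(X)$ in every case, violating the strict upper bound of Theorem~\ref{thm:smooth}; hence the action is nonsmoothable. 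The whole argument is powered by the fact that $S^4$-summands are \emph{index-inert} while contributing exactly the two extra fixed points needed per $S^2 \times S^2$ factor, so there is no real obstacle beyond the bookkeeping match of signatures, Euler numbers, and TOR parities.
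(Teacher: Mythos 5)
Your proposal is correct and follows essentially the same route as the paper's own proof: keep the sixteen weights $\alpha'_j$ from the proof of Theorem~\ref{thm:K3}, adjoin $t$ copies of $S^4_{\beta_k}$ (which satisfy the bookkeeping of Proposition~\ref{prop:exist} and contribute nothing to the index), and conclude that $\dim(\ind_{\mathbb Z_{11}}D_X)^{\mathbb Z_{11}}=6\geq b_2^+(X)=3+t$ for $t=1,2,3$ violates Theorem~\ref{thm:smooth}. Note that your Euler-number condition $3(m+m')+2(r-s)=\chi(X)$ is the form actually used throughout the paper's proofs (the statement of Proposition~\ref{prop:exist} has a sign slip), so your verification is the right one.
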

\begin{proof}
Let $X$ be $K3 \# (\#^t S^2 \times S^2)$ 
 for $t=1$, $2$, or $3$. 
% one of the three manifolds
% and $l$ the number of copies of $S^2\times S^2$.
Choose weights $\beta_k$ for $1\leq k\leq t$ arbitrarily, 
 then 
 the $\mathbb Z_p$-manifolds $\overline{\mathbb C P^2_{\alpha'_j}}$ for $1\leq j\leq 16$ 
 in the proof of Theorem~\ref{thm:K3} and 
 $S^4_{\beta_k}$ for $1\leq k\leq t$ satisfy 
 the assumption of Proposition~\ref{prop:exist} for $X$.
% thus we obtain a locally linear $\mathbb Z_p$-action on $X$.

If the action is smoothable,
 \[
  \dim(\ind_{\mathbb Z_{11}}D_X)^{\mathbb Z_{11}}=6
 \]
 because the fixed points from $S^4_{\beta_k}$ does not
 contribute.
Then the inequality of Theorem~\ref{thm:smooth} is not satisfied, 
 which implies that the action is nonsmoothable.
\end{proof}

\begin{remark}
%1. 
In the case of $n=2$ or $3$,
 the estimate of $p$ in Theorem~\ref{thm:sharp} 
 coincides with those in Theorem~\ref{thm:main}
 and it is not an improvement. % the estimate
%if we use only our construction in Secion~2.
In the case of $n\geq4$,
 Theorem~\ref{thm:sharp} gives an improvement.
% though the former is essentially better than the latter,
% much
Still better estimations might be obtained using
 the construction in Section~2
 using  other choices of $m$, $m'$, $r$ and $s$.
% and weights would be still possible.
% 
%\noindent
%2. 
%We do not know
% whether or not 
% we can find by our method 
% a nonsmoothable $\mathbb Z_p$-action on $K3$ for $13\leq p\leq113$ 
% even if such a action exists.
\end{remark}

\bibliographystyle{amsplain}

\end{document}